\newcommand{\R}{\mathbb{R}}
\newcommand{\Rd}{{\mathbb{R}^d}}
\newcommand{\E}{\mathcal{E}}
\newcommand{\I}{\mathcal{I}}
\newcommand{\norm}[1]{\left\| #1 \right\|}
\newcommand{\meni}{\leqslant}
\newcommand{\maig}{\geqslant}
\newcommand{\supp}{\textrm{supp}}
\newtheorem{definicion}{Definition}[section]
\newtheorem{proposition}[definicion]{Proposition}
\newtheorem{theorem}[definicion]{Theorem}
\newtheorem{corollary}[definicion]{Corollary}
\newtheorem{lemma}[definicion]{Lemma}
\newtheorem{remark}[definicion]{Remark}
\numberwithin{equation}{section}
\begin{document}

\title{Exponential Convergence Towards Stationary States \\for the 1D  Porous Medium Equation \\with Fractional Pressure}

\author{\textsc{J. A. Carrillo\thanks{Department of Mathematics, Imperial College
London, London SW7 2AZ, UK. Email: carrillo@imperial.ac.uk.}, Y. Huang\thanks{Department of Mathematics, Imperial College London, London SW7 2AZ, UK. Email: yanghong.huang@imperial.ac.uk.}, M. C. Santos\thanks{Departamento de Matem\'atica - IMECC, Universidade Estadual de Campinas, 13083-859, Campinas-SP, Brazil. Email: ra115906@ime.unicamp.br.}, J. L. V\'{a}zquez\thanks{Departmento de Matem\'aticas, Universidad Aut\'onoma de Madrid, 28049-Madrid, Spain. Email: juanluis.vazquez@uam.es.}}}
\date{}
\maketitle

\begin{abstract}
We analyse the asymptotic behaviour of solutions to the one dimensional fractional
version of the porous medium equation introduced by Caffarelli and V\'{a}zquez~\cite{vazquez2,vazquez1}, where the pressure is obtained as a Riesz potential associated to the density. We take advantage of
the displacement convexity of the Riesz potential in one dimension to show a functional inequality involving the entropy, entropy dissipation, and the Euclidean transport distance.
An argument by approximation shows that this functional inequality is enough to deduce the exponential convergence of solutions in self-similar variables to the unique steady states.
\end{abstract}

\footnotetext{\emph{Date: \today}}
\emph{AMS Mathematics Subject Classification 2000.} 35K55, 35K65, 26A33, 76S05


\emph{Key words: Porous medium equation, fractional operators, asymptotic behaviour, entropy dissipation, functional inequalities.}

\section{Introduction}

In this work, we analyse the long-time asymptotics of the nonlinear nonlocal equation
\begin{equation}\label{eq:fracPMESM}
\rho_t = \nabla\cdot\big(
\rho(\nabla (-\Delta)^{-s} \rho + \lambda x)
\big), \qquad \lambda>0,\ x\in\mathbb{R}^d\,,
\end{equation}
obtained from the fractional version of the porous medium equation introduced by Caffarelli and V\'{a}zquez~\cite{vazquez2,vazquez1}
\begin{equation}\label{eq:fracPME}
u_\tau = \nabla\cdot (u\nabla p),\quad p = (-\Delta)^{-s} u \,,
\end{equation}
by passing to self-similar variables. Indeed, by adding the Fokker-Planck confining term $\nabla \cdot (xu)$, solutions to~\eqref{eq:fracPMESM} will characterize the long-time asymptotic behaviour
of solutions to~\eqref{eq:fracPME}. This connection will be further explained below.

The fractional porous medium equation~\eqref{eq:fracPME}  can be viewed as a continuity equation, $u_\tau + \nabla\cdot(u \mathbf{V})=0$,  for a density or concentration $u(\tau,y)$ with velocity $\mathbf{V} = -\nabla p$, where the  velocity potential or pressure $p$  is related to $u$ by the inverse of a fractional Laplacian operator  $p=(-\Delta)^{-s} u$, $0<s<1$. The standard porous medium equation is recovered for $s=0$. We assume that the unknown $u(\tau,y)$, representing a density or concentration, is defined for $y \in \mathbb{R}^d$
and $\tau>0$ and supply initial data $u(y,0)=u_0(y)$,  a nonnegative mass distribution in $L^1(\mathbb{R}^d)\cap L^\infty(\mathbb{R}^d)$.
We also point out that the pressure  can be represented as
\begin{equation*}
 p= (-\Delta)^{-s} u = W\ast u,
\end{equation*}
with the singular convolution kernel
\begin{equation}\label{eq:kernelW}
 \qquad W(y)=c_{d,s}|y|^{2s-d},\qquad
c_{d,s} = \frac{s2^{-2s}\Gamma(d/2-s)}{\pi^{d/2}\Gamma(1+s)},
\end{equation}
and $0<s<\min(1,d/2)$,
called the Riesz potential of $u$ as in the standard textbooks~\cite{MR0350027,MR0290095}. This representation also makes sense for $s=d/2$ with the logarithm kernel $W(y)=-2^{1-d}\pi^{-d/2}\Gamma(d/2)^{-1}\log |y|$ (see \cite{carr1,ledoux} in one dimension) and for $1/2 < s<1$ in one dimension with the negative coefficient
$c_{1,s}$ and the positive exponent $2s-1$ in $W(y)$. As a result, the potential
$W$ does not necessarily decay to zero at infinity in the last two cases, but the magnitude of the gradient $\nabla  W$ does.
When the kernel $W(y)$ is replaced by a less singular radially symmetric function, the same equation appeared in granular flow~\cite{MR1471181,MR1812737,LT,MR2053570} and biological swarming~\cite{MR1698215,BCL,BT2}.

To describe the long time behaviour of solutions to~\eqref{eq:fracPME}, it is more convenient to study the corresponding transformed equation~\eqref{eq:fracPMESM} as discussed in~\cite{MR1777035,vazquez1}, by defining
\begin{equation}\label{changevariables}
\rho(t,x) := (1+\tau)^{\alpha} u(\tau,y),
\end{equation}
with the similarity variables $x=y(1+\tau)^{-\beta}$ and $t = \log(1+\tau)$.
The exponents $\alpha$ and $\beta$ can be determined from dimensional analysis
and the mass conservation~\cite{MR1426127}, which are given by
\begin{equation}\label{changevariables2}
\alpha = d/(d+2-2s),\quad \beta = 1/(d+2-2s).
\end{equation}
In this way, the rescaled density $\rho(t,x)$ satisfies~\eqref{eq:fracPMESM} with $\lambda=\beta=1/(d+2-2s)$.  We will keep $\lambda>0$ arbitrary in \eqref{eq:fracPMESM} as a parameter to characterize the convexity of the energy defined below and the convergence rate to the steady state later on. As a result, the long time behaviour of the original density $u(\tau,y)$ is completely specified if we establish the convergence of $\rho(t,x)$ to the steady state $\rho_\infty(x)$ of~\eqref{eq:fracPMESM} with
$\lambda=\beta$.

The existence and uniqueness of the steady state $\rho_\infty$ of~\eqref{eq:fracPMESM} for each given mass was initially characterized by an obstacle problem in~\cite{vazquez1}, and then the explicit
expression of $\rho_\infty$ was obtained by Biler, Imbert and Karch~\cite{biler2011barenblatt,biler}, for even more general nonlinear dependence of the pressure $p=(-\Delta)^{-s}u^{m-1}$, $m>1$.
In case $m=2$ of our interest here, the self-similar solution of~\eqref{eq:fracPME} is given by
\[
u(\tau,y) = (1+\tau)^{-d/(d+2-2s)} \rho_\infty\big(y(1+\tau)^{-1/(d+2-2s)}\big),
\]
with the self-similar profile
\[
\rho_\infty(x)
= K_{d,s}\big(R^2-|x|^2\big)_+^{1-s}
\]
and the prefactor
\begin{equation*}
    K_{d,s} =\frac{2^{2s-1}\Gamma(d/2+1)}{\Gamma(2-s)\Gamma(d/2+1-s)}\lambda\,.
\end{equation*}
The radius of the support $R$ is determined by the total conserved mass $M$, that is,
\begin{equation}\label{eq:MR}
M = \int_{\mathbb{R}^d} u(\tau,y)dy = \frac{2^{2s}\pi^{d/2}\Gamma(d/2+1)\lambda}{(d+2-2s)\Gamma(d/2+1-s)^2}R^{d+2-2s}.
\end{equation}
After these preliminary discussion, we concentrate on the convergence of $\rho(t,x)$ to the steady state $\rho_\infty(x)$ in the rest of the paper.

Let us point out that the fractional porous medium equation \eqref{eq:fracPMESM} can be viewed as a particular case of the aggregation equation \cite{MR2053570,BCL,BCLR2} written as
\begin{equation}\label{eq:aggreg}
\rho_t = \nabla\cdot\big( \rho ( \nabla W\ast \rho+\nabla V ) \big), \qquad x\in\mathbb{R}^d\,,
\end{equation}
where $V(x)=\tfrac{\lambda}2 |x|^2$ and $W (x)=c_{d,s} |x|^{2s-d}$, $0<s<1$.

During the past fifteen years, several important techniques~\cite{otto,MR1777035,dolbeault,MR2053570,villani,ambrosiogiglisavare}
have been developed for the convergence of linear or nonlinear Fokker-Planck equations to their steady states with sharp rate.
These techniques can also  be employed to prove the convergence of solutions of~\eqref{eq:fracPMESM} to $\rho_\infty$, by realizing that the free energy $\E(\rho)$ defined as
\begin{eqnarray}\label{Energy}
\E(\rho)&=&  \frac{1}{2}\int_\Rd \big\{ (-\Delta)^{-s}\rho(x) + \lambda|x|^2 \big\}\rho(x)\;dx \\
&=&\frac{c_{d,s}}{2}\int_\Rd \int_\Rd \frac{\rho(x)\rho(y)}{|x-y|^{d-2s}}\;dydx+ \lambda\int_\Rd \frac{|x|^2}{2}\rho(x)\;dx, \nonumber
\end{eqnarray}
is a Lyapunov functional for $0<s<\min(1,d/2)$. One can similarly define the Lyapunov functional for $1/2\leq s< 1$ in one dimension, assuming that $\rho$ satisfies a growth condition at infinity, namely $\rho\log|x|  \in L^1(\R)$ if $s=1/2$ and $\rho|x|^{2s-1} \in L^1(\R)$ if $1/2 <s<1$. In fact, \eqref{eq:fracPMESM} is a gradient flow of the free energy functional \eqref{Energy} with respect to the Euclidean transport distance in the metric space of probability measures~\cite{ambrosiogiglisavare,MR2209130}.

The basic properties of the energy $\E(\rho)$ and its dissipation $\I(\rho)$ defined below, together with the long-time asymptotics of
solutions to \eqref{eq:fracPMESM}, are already derived in~\cite{vazquez1}.
More precisely, along the evolution governed by~\eqref{eq:fracPMESM}, one can obtain the formal relation
$d\E(\rho)/dt=-\I(\rho)$, where we denote by $\I(\rho)$ the entropy production or entropy dissipation of $\E$ given by
\[
\qquad \I(\rho) = \int_\Rd \rho \left| \nabla \xi\right|^2dx\;,\;\; \mbox{ with }\;\;\xi = \frac{\delta \E}{\delta \rho} = (-\Delta)^{-s}\rho + \frac{\lambda}{2}|x|^2.
\]
Using this relation, the solution of~\eqref{eq:fracPMESM}  is shown
to converge towards $\rho_\infty$ in \cite{vazquez1}, but no rate is obtained. To be more precise, they show that solutions of the fractional porous medium equation~\eqref{eq:fracPMESM} satisfy the energy inequality
$\E\big(\rho(t,\cdot)\big)+\int_0^t \I\big(\rho(\tau,\cdot)\big)d\tau\leq \E\big(\rho(0,\cdot)\big)$
 that is enough to conclude the converge of $\rho(t,x)$ to the steady state $\rho_\infty(x)$.

In this work, we will focus on obtaining the sharp convergence rate  for the solutions of the Cauchy problem for \eqref{eq:fracPMESM} towards the equilibrium $\rho_\infty$, for all $0<s<1$  in one dimension,
although many of the calculations are presented in general dimensions.
In the particular case of $s=1/2$ in one dimension, the kernel is given by the logarithmic potential and it was treated in \cite{carr1}, see also \cite{ledoux} for related functional inequalities. In fact, it is shown in \cite{carr1} that the energy $\E(\rho)$ is displacement convex, which can not be derived directly from the criteria given in the seminar paper by McCann~\cite{MR1451422}. We will take advantage of these techniques in~\cite{carr1} to prove certain functional inequalities, in particular the HWI inequalities as introduced in~\cite{ottovillani} (also obtained in~\cite{ledoux} for the logarithmic case $s=1/2$).
This displacement convexity and related inequalities are then used to show the convergence towards equilibrium in one dimension, through the exponential decay of the transport distances and the relative energy,
for general $s \in (0,1)$.

Finally, we point out that the problem of sharp convergence rates in several space dimensions is still open. Moreover, it could be interesting to prove or disprove analogous functional inequalities involving nonlocal operators in several space dimensions corresponding to the ones established here in one dimension; see more comments at the end of Section 2. New techniques or inequalities have to be developed. Showing asymptotic convergence when the confining term $\nabla\cdot(\lambda x\rho)$ is replace by the general drift $\nabla\cdot(\rho\nabla V)$ is another interesting problem, see \cite{chafai,MR2053570}.

The organization of this work is as follows. We first remind the reader in Section 2 about the basics of the entropy/entropy dissipation method, together with the main functional inequality that we will prove in one dimension. In fact, we follow closely the strategy developed for nonlinear diffusion equations in~\cite{MR889476,MR1842428,MR1777035,dolbeault,MR1853037,MR2053570} to reduce to the proof of a Log-Sobolev type inequality. This inequality is then proved in Section 3 as a consequence of the HWI inequality which crucially uses the displacement convexity. Finally, Section 4 is devoted to obtain the rate of convergence towards equilibrium of the solutions to \eqref{eq:fracPMESM} by an approximation method using the construction of solutions in \cite{vazquez2}.


\section{Entropy dissipation method}\label{BE}

In this section, we first show some formal computations using the Bakry-Emery strategy \cite{MR889476} demonstrating that the relative entropy $\E(\rho|\rho_\infty):=\E(\rho)-\E(\rho_\infty)$ decays to zero exponentially fast in one dimension, by taking the second order time derivative of $\E(\rho|\rho_\infty)$ along the evolution equation~\eqref{eq:fracPMESM}. We will then discuss the strategy we use to render this computation rigorous in the following sections.

Before starting the computations on the dissipation of the free energy, let us discuss a bit more on the equilibrium solution $\rho_\infty$.  It was recently proved in \cite[Theorem 1.2]{chafai} that $\E$ restricted to $\mathcal{P}(\R^d)$ is strictly convex in the classic sense for $0<s<\min(1,d/2)$, and it has a unique compactly supported minimizer $\rho_\infty$ characterized by
\begin{subequations}
\begin{align}
(-\Delta)^{-s}\rho_\infty(x) + \lambda\frac{|x^2|}{2} &= C_* \;, \;\;\; \forall\;x\in \supp(\rho_\infty)\label{min1}
\\
(-\Delta)^{-s}\rho_\infty(x) + \lambda\frac{|x^2|}{2} &\maig C_* \; , \;\;\; \mbox{a.e. }\Rd \,,\label{min2}
\end{align}
\end{subequations}
for some constant $C_*$ determined by the total mass. This formulation is equivalent to the obstacle problem in~\cite{vazquez1}, for the rescaled pressure $P = (-\Delta)^{-s}\rho$ and the quadratic obstacle $\Phi(x)=C_*-\tfrac{\lambda}2 |x|^2$.
Using the following relation (see~\cite{biler2011barenblatt,biler})
\begin{align}\label{rhoinfty}
  (-\Delta)^{-s}(R^2-|x|^2)_+^{1-s} &= \frac{2^{-2s}\Gamma(2-s)
 \Gamma(d/2-s)}{\Gamma(d/2)}\left(R^2-\frac{d-2s}{d}|x|^2\right)\cr
 &= \frac{\lambda}{2K_{d,s}}\left(\frac{d}{d-2s}R^2 - |x|^2\right) \;\;,\;\;\; \mbox{ for all } |x|\meni R,
\end{align}
it is easy to verify that $\rho_\infty=K_{d,s}(R^2-|x|^2)^{1-s}_+$
is indeed the minimizer for $\E$ for $0<s<\min(1,d/2)$. Similar computations can be done in the range $1/2\leq s <1$, see \cite{carr1,BCLR2} for instance.

Now, we can consider the difference $\E(\rho|\rho_\infty):=\E(\rho)-\E(\rho_\infty)$ as a measure of convergence towards equilibrium. We first rewrite the equation~\eqref{eq:fracPMESM} as
\begin{equation}
\rho_t = \nabla\cdot(\rho \nabla \xi)\qquad \mbox{ with } \ \xi := (-\Delta)^{-s}\rho + \lambda|x|^2/2.
\end{equation}
Assuming that $\rho$ (and thus $\xi$) is smooth enough, taking the time derivative of the entropy dissipation rate $\I(\rho)$ along the evolution equation, we obtain
\begin{align*}
\frac{d}{dt}\I(\rho) &= \int \rho_t |\nabla \xi|^2 + 2\int \rho \nabla\xi\cdot\nabla \xi_t\\
&= \int \nabla\cdot(\rho \nabla \xi)|\nabla \xi|^2
+2\int \rho \nabla\xi\cdot\nabla\big[(-\Delta)^{-s}
 \big(\nabla\cdot(\rho \nabla \xi)\big)\big].
\end{align*}
Using the fact $D^2\xi = D^2(-\Delta)^{-s}\rho + \lambda I$ for the Hessian matrix of $\xi$, the first term on the right hand side above can be written as
\[
 \int \nabla\cdot(\rho \nabla \xi)|\nabla \xi|^2
= -2\int \rho \langle D^2 \xi\cdot \nabla \xi, \nabla \xi \rangle
=-2\lambda \I(\rho) -2\int \rho \big\langle D^2(-\Delta)^{-s}\rho\cdot
\nabla \xi,\nabla \xi\big\rangle \,.
\]
Therefore, $d\I(\rho)/dt=-2\lambda \I(\rho) - 2\mathcal{R}(\rho)$ with
\begin{align}\label{eq:Rexp}
 \mathcal{R}(\rho) &= \int \rho \big\langle D^2(-\Delta)^{-s}\rho\cdot
\nabla \xi,\nabla \xi\big\rangle-
\int \rho \nabla\xi\cdot\nabla\big[(-\Delta)^{-s}
 \big(\nabla\cdot(\rho \nabla \xi)\big)\big].
\end{align}
The entropy-entropy dissipation method can be summarized as follows: if $\mathcal{R}(\rho)\geq0$
for the solution $\rho$, then from the conditions $d\E(\rho)/dt = -\I(\rho)$ and
$d\I(\rho)/dt \leq -2\lambda \I(\rho)$, we can conclude that
$\I(\rho)(t) \leq \I(\rho)(0)e^{-2\lambda t}$ and $\E(\rho)(t)-\E(\rho_\infty)
\leq \big( \E(\rho)(0)-\E(\rho_\infty)\big)e^{-2\lambda t}$, or the exponential convergence
of both $\I(\rho)(t)$ and $\E(\rho)(t)-\E(\rho_\infty)$ towards zero.

When $s=0$, the equation~\eqref{eq:fracPMESM} reduces to the standard porous medium equation with
quadratic nonlinearity. In this special case, the non-negativity of $\mathcal{R}(\rho)$
 was established in~\cite{MR1777035} using
several integration by parts, leading to (with $\xi=\rho+\lambda |x|^2/2$)
\[
\qquad  \mathcal{R}(\rho) =
\frac{1}{2}\int \rho^2\big[ (\Delta\xi)^2 + \|D^2\xi\|_F^2\big] \geq 0.
\]
Here $\|A\|_F = \sqrt{\mbox{tr} (A^TA)}$ is the Frobenius norm of the matrix $A$. Consequently, by deducing various decay on the norms of $\rho(t,\cdot)-\rho_\infty(\cdot)$,
the solution $\rho$ converges to its steady state exponentially fast.

However, in the case $s\in(0,1)$ considered here,
it is not immediately clear whether $\mathcal{R}(\rho)$ given
in~\eqref{eq:Rexp} above is
nonnegative or not. To simplify $\mathcal{R}(\rho)$, we need more explicit expressions
of $D^2(-\Delta)^{-s}\rho$ and $\nabla\big[(-\Delta)^{-s}
 \big(\nabla\cdot(\rho \nabla \xi)\big)\big]$, or the second order derivatives of the
Riesz potential of $\rho$ and $\rho\nabla\xi$ respectively. Since these derivatives
can not be applied to the corresponding kernel $W(x)=c_{d,s}|x|^{2s-d}$ directly, we
have to invoke the following technical lemma.

\begin{lemma} If $\rho$ is a smooth function on $\mathbb{R}^d$, then
the components of the Hessian matrix of the Riesz potential $(-\Delta)^{-s}\rho$ are
given by
\begin{equation}\label{eq:hessIF}
 D_{ij}(-\Delta)^{-s}\rho(x) =
\partial_{ij}(-\Delta)^{-s}\rho(x)
=-c^+_{d,s}\int K_{ij}(x-y)\big(\rho(x)-\rho(y)\big)dy,
\end{equation}
where $K_{ij}(x) = |x|^{2s-2-d}\big( (d+2-2s)x_ix_j/|x|^2-\delta_{ij}
\big)$ and $c^+_{d,s} = (d-2s)c_{d,s}$.
\end{lemma}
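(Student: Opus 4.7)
A direct computation on the Riesz kernel $W(z)=c_{d,s}|z|^{2s-d}$ yields the pointwise identity $\partial_i\partial_j W(z)=c^+_{d,s}K_{ij}(z)$ for $z\neq 0$. However $|K_{ij}(z)|\sim|z|^{2s-d-2}$ is not locally integrable at the origin, so the naive expression $c^+_{d,s}\int K_{ij}(x-y)\rho(y)\,dy$ is meaningless; the subtraction of $\rho(x)$ built into the lemma is precisely what restores integrability. My plan is to change variables to $z=y-x$, differentiate twice under the integral with the derivatives initially placed on $\rho$, and then transfer both derivatives back onto the kernel via two successive integrations by parts. The second of these integrations by parts will require $\epsilon$-regularization, and divergent boundary and volume contributions will cancel exactly to produce the subtracted kernel.

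\textbf{Execution.} For $\rho$ smooth with adequate decay, write $(-\Delta)^{-s}\rho(x)=c_{d,s}\int|z|^{2s-d}\rho(x+z)\,dz$ and differentiate under the integral: $\partial_i(-\Delta)^{-s}\rho(x)=c_{d,s}\int|z|^{2s-d}\partial_{z_i}\rho(x+z)\,dz$. A first integration by parts in $z_i$ is harmless since $|z|^{2s-d+1}\to 0$ as $z\to 0$, and using $\partial_{z_i}|z|^{2s-d}=-(d-2s)|z|^{2s-d-2}z_i$ gives $\partial_i(-\Delta)^{-s}\rho(x)=c^+_{d,s}\int|z|^{2s-d-2}z_i\,\rho(x+z)\,dz$. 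Next apply $\partial_{x_j}$, move it onto $\rho$, truncate to $|z|>\epsilon$, and integrate by parts in $z_j$ using the key identity $\partial_{z_j}[|z|^{2s-d-2}z_i]=-K_{ij}(z)$. This produces a volume term $\int_{|z|>\epsilon}K_{ij}(z)\rho(x+z)\,dz$ and a surface contribution over $\{|z|=\epsilon\}$ of order $\epsilon^{2s-2}$.

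\textbf{Cancellation and main obstacle.} The delicate part is the passage $\epsilon\to 0$. Split $\rho(x+z)=\rho(x)+[\rho(x+z)-\rho(x)]$ in the volume term; a polar computation using $\int_{S^{d-1}}\omega_i\omega_j\,d\sigma=|S^{d-1}|\delta_{ij}/d$ gives $\int_{|z|>\epsilon}K_{ij}(z)\,dz=\epsilon^{2s-2}|S^{d-1}|\delta_{ij}/d$, while Taylor-expanding $\rho(x+\epsilon\omega)$ shows the surface term equals $-\rho(x)\epsilon^{2s-2}|S^{d-1}|\delta_{ij}/d + o(1)$. The two divergent $\epsilon^{2s-2}$ pieces cancel identically, so the limit exists and equals $c^+_{d,s}\int K_{ij}(z)[\rho(x+z)-\rho(x)]\,dz$; changing back to $y=x+z$ and using $K_{ij}(-z)=K_{ij}(z)$ yields the claimed formula. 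The main obstacle is precisely this exact cancellation, which hinges on the numerical coincidence between the angular factor $(2-2s)/d$ appearing in the average of $K_{ij}$ and the radial primitive $1/(2-2s)$ coming from $\int_\epsilon^\infty r^{2s-3}\,dr$. For $s\leq 1/2$ the remaining integral is only conditionally convergent near $z=0$ and must be interpreted as a principal value, where the evenness of $K_{ij}$ annihilates the odd linear term in $\rho(x+z)-\rho(x)$ and reduces the integrand to an integrable $O(|z|^{2s-d})$ expression.
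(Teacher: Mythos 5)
Your computation is correct and, at its core, it is the same argument as the paper's: excise the ball $\{|z|\le\epsilon\}$, integrate by parts so that the second derivative lands on the kernel, and observe that the divergent contribution $\rho(x)\int_{|z|>\epsilon}K_{ij}(z)\,dz=\rho(x)\,\epsilon^{2s-2}|\mathbb{S}^{d-1}|\delta_{ij}/d$ is cancelled exactly by the sphere boundary term after Taylor expansion. The only structural difference is the packaging: the paper (Appendix A) defines $D_{ij}(-\Delta)^{-s}\rho$ distributionally and moves the two derivatives off a test function $\phi$, Taylor-expanding $\phi$ on $\partial B(y,\epsilon)$, whereas you differentiate under the integral sign and work pointwise at $x$; your version directly produces the classical Hessian asserted in the lemma, while the paper's produces the distributional identity, so the two proofs are mirror images with the roles of $\rho$ and the test function exchanged, relying on the identical surface and solid-angle integrals. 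One imprecision you should repair: for $s\le 1/2$ the intermediate formula $\partial_i(-\Delta)^{-s}\rho(x)=c^+_{d,s}\int|z|^{2s-d-2}z_i\,\rho(x+z)\,dz$ is not an absolutely convergent integral (the integrand is of size $|z|^{2s-d-1}$ near the origin), and the boundary term in the first integration by parts has naive size $\epsilon^{2s-1}$, so it does not vanish ``because $|z|^{2s-d+1}\to0$''; it vanishes because $\int_{|z|=\epsilon}z_i\,dS=0$ kills the leading constant $\rho(x)$ and leaves a remainder $O(\epsilon^{2s})$. Hence for $s\le1/2$ that intermediate identity must be read as a principal value (this is exactly the dichotomy in the gradient lemma proved at the beginning of Section 3), which is consistent with the truncation to $|z|>\epsilon$ that you perform in the next step anyway; with that reading, and the standing assumption that the smooth $\rho$ decays enough to discard the boundary at infinity, your cancellation argument goes through verbatim, and your closing evenness remark correctly accounts for the conditional convergence of the final integral when $s\le1/2$.
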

This lemma is proved by interpreting $D_{ij}(-\Delta)^{-s}\rho$ as
a distributional derivative, and the details are given in Appendix~\ref{sec:distdev}. Using the singular integral representation~\eqref{eq:hessIF}, we obtain
\begin{align}\label{rrho}
\mathcal{R}(\rho) &=
\sum_{i,j}\int_{\mathbb{R}^d}
\Big\{\rho(x)\partial_i\xi(x)\partial_j\xi(x) D_{ij}(-\Delta)^{-s}\rho(x)
-\rho(x)\partial_i\xi(x)D_{ij}(-\Delta)^{-s}[\rho\partial_j\xi](x)
\Big\}dx\cr
&=-c_{d,s}^+\sum_{i,j}\int_{\mathbb{R}^d}\int_{\mathbb{R}^d}
\rho(x)\partial_i\xi(x)K_{ij}(x-y) \cr
&\qquad \qquad \qquad \Big\{
\partial_j\xi(x)\big(\rho(x)-\rho(y))
-\rho(x)\partial_j\xi(x)+\rho(y)\partial_j\xi(y)
\Big\}dydx \cr
&= c_{d,s}^+\sum_{i,j}\int_{\mathbb{R}^d}\int_{\mathbb{R}^d}
\rho(x)\rho(y)\partial_i\xi(x)K_{ij}(x-y) \Big\{\partial_j\xi(x)-\partial_j\xi(y)\Big\}dydx \cr
&= \frac{c_{d,s}^+}{2}\int_{\mathbb{R}^d}\int_{\mathbb{R}^d}
\rho(x)\rho(y) \big\langle \nabla\xi(x)-\nabla \xi(y), \mathbf{K}(x-y)
\big(\nabla\xi(x)-\nabla \xi(y)\big)\big\rangle dydx,
\end{align}
where $\mathbf{K}(x)$ is a matrix with entries $K_{ij}(x)$ and the integrand is symmetrized in the last step.

\begin{remark} {\rm Similar expressions already appear in the context of non-local equations for granular flow or biological swarms, when the interaction kernel is smoother. In fact, if $\rho$ is a smooth solution of $\rho_t = \nabla\cdot(\rho \nabla W*\rho)$ with a smooth kernel $W$
such that the Hessian $D^2W$ is locally integrable, then the time derivative of the interaction energy $\frac{1}{2}\iint W(x-y)\rho(x)\rho(y)dydx$ is $-\int \rho|\nabla\xi|^2dx$ with $\xi = W*\rho$. The second order time derivative of the energy is
\[
-\int_\Rd \rho_t|\nabla \xi|^2 dx -2\int_\Rd \rho\nabla \xi\cdot \nabla \xi_t dx
\]
which is exactly
\[ \int_\Rd\int_\Rd \rho(x)\rho(y)\Big\langle D^2W(x-y)\big(\nabla\xi(x)-\nabla\xi(y)\big),\nabla\xi(x)-\nabla\xi(y)
\Big\rangle dy\,dx
\]
by applying appropriate integration by parts.}
\end{remark}

In one dimension, $\mathbf{K}(x)=(2-2s)|x|^{2s-3}$
is a positive scalar and $\mathcal{R}(\rho) \geq 0$ for
any non-negative density $\rho$, leading to the desired exponential convergence. However, in higher dimensions,
the matrix $\mathbf{K}(x)$ can be written as
\[
\mathbf{K}(x) = |x|^{2s-2-d}\big( (d+2-2s)x\otimes x/|x|^2-I)\,,
\]
which has one positive eigenvalue $\lambda_1=(d+1-2s)|x|^{2s-d-2}$
and $d-1$ negative eigenvalues $\lambda_i=-|x|^{2s-d-2}$, $i=2,\cdots,d$. Therefore,
it is not known from~\eqref{rrho} whether $\mathcal{R}(\rho)$ is positive or not.
To summarize, we can conclude that both the relative entropy $\E(\rho)-\E(\rho_\infty)$
and the entropy dissipation rate $\I(\rho)$ converge formally to zero
exponentially fast only in one dimension.

The above approach for the exponential decay in one dimension
can be proved rigorously, by establishing the results for mollified solutions to the regularized equation (with linear diffusion for example). One of the main difficulties in our case lies in the definition and continuity of the entropy dissipation $\I(\rho)$. The set of functions for which $\I$ is finite is difficult to handle. Therefore, passing to the limit the exponential decay of the entropy dissipation using density argument is a complicated task in our case.
Alternatively, we prove the same results in Section 3 for smooth
solutions, and then pass to the limit in Section 4. Before going
to that, we point out that the exponential convergence
is in fact intimately connected with certain inequalities in
the next subsection.

\subsection{Sobolev inequalities with fractional Laplacian}

If $\mathcal{R}(\rho) \geq 0$, from the limits $\I\big(\rho(t)\big)\to 0$
and $\E\big(\rho(t)\big)-\E(\rho_\infty\big) \to 0$ as $t$ goes to infinity and the inequality
\[
 \frac{d}{dt} \I(\rho) \leq -2\lambda \I(\rho)
 =-2\lambda \frac{d}{dt}\big(\E(\rho)-\E(\rho_\infty)\big),
\]
we can integrate in time to get
\begin{equation}\label{LSI}
\E(\rho)-\E(\rho_\infty) \meni \frac{1}{2\lambda}\I(\rho).
\end{equation}
On the other hand, by assuming~\eqref{LSI} above, we can also prove the
exponential convergence of $\E(\rho)-\E(\rho_\infty)$ to zero
with exponential rate $-2\lambda$ (but not necessarily
the exponential convergence of $\I(\rho)$), by integrating
\[
 \frac{d}{dt} \big(\E(\rho)-\E(\rho_\infty)\big)
=-\I(\rho) \meni -2\lambda \big(\E(\rho)-\E(\rho_\infty)\big)
\]
in time. The inequality~\eqref{LSI} is usually  called, in the context of optimal transport, Log-Sobolev inequality in the linear diffusion case or generalized Log-Sobolev inequalities otherwise. We will revisit~\eqref{LSI} in the next section by investigating the displacement convexity of the energy $\E(\rho)$.
In particular, it becomes the logarithmic Sobolev inequality~\cite{MR0420249} for linear
Fokker-Planck equation~\cite{MR1842428,MR1639292,MR1704435}, and a special family of {G}agliardo-{N}irenberg inequalities for nonlinear Fokker-Planck equations with porous medium type diffusion~\cite{dolbeault,MR1777035,MR1853037}.

Following a similar approach as Del Pino and Bolbeault~\cite{dolbeault},  expanding both sides of~\eqref{LSI}, we obtain the equivalent inequality
\begin{multline*}
 \lambda\left[
\int_{\mathbb{R}^d} \rho(x)(-\Delta)^{-s}\rho(x)dx
-2\int_{\mathbb{R}^d} \rho(x)
x\cdot\nabla(-\Delta)^{-s}\rho(x)dx
\right] \\
\leq 2\lambda \E(\rho_\infty)+
\int_{\mathbb{R}^d} \rho(x)|\nabla(-\Delta)^{-s}\rho(x)|^2dx.
\end{multline*}
The second term on the left-hand side can be simplified
using the definition of $(-\Delta)^{-s}\rho$ as
the Riesz integral
\[
 (-\Delta)^{-s}\rho(x) = c_{d,s}
\int_{\mathbb{R}^d} \frac{1}{|x-y|^{d-2s}}\;\rho(y)dy\,,
\]
and consequently
\begin{align*}
 -2\int_{\mathbb{R}^d} \rho(x)
x\cdot\nabla(-\Delta)^{-s}\rho(x)dx
 &=2(d-2s)c_{d,s}\int_{\mathbb{R}^d}\int_{\mathbb{R}^d}
\rho(x)\rho(y)x\cdot (x-y) |x-y|^{2s-d-2}dydx \cr
&= (d-2s)c_{d,s}\int_{\mathbb{R}^d}\int_{\mathbb{R}^d}
\rho(x)\rho(y) |x-y|^{2s-d}dydx  \cr
&= (d-2s)\int_{\mathbb{R}^d}\rho(x)(-\Delta)^{-s} \rho(x)dx.
\end{align*}
Therefore, the inequality~\eqref{LSI} becomes
\begin{equation}\label{eq:LSI1}
\lambda(d+1-2s)\int_{\mathbb{R}^d}\rho(x)(-\Delta)^{-s} \rho(x)dx
\leq 2\lambda \E(\rho_\infty)+
\int_{\mathbb{R}^d} \rho(x)|\nabla(-\Delta)^{-s}\rho(x)|^2dx.
\end{equation}
To get a self-consistent inequality, we have to write $\E(\rho_\infty)$
in terms of some functionals of $\rho$, which is established through the total conserved mass, $M=\int \rho = \int \rho_\infty$.
Using the explicit expression for $\rho_\infty(x)=K_{d,s}(R^2-|x|^2)_+^{1-s}$, the identity~\eqref{rhoinfty}
implies that
\[
 (-\Delta)^{-s}\rho_\infty(x) = \frac{\lambda}{2}\frac{d}{d-2s}R^2 - \frac{\lambda}{2}|x|^2 \;,\;\; \mbox{ for } |x|\meni R.
\]
Therefore, we conclude that
\begin{align*}
 \E(\rho_\infty) &= \frac{1}{2}\!\int_{\mathbb{R}^d}
\rho_\infty (x)\Big((-\Delta)^{-s}\rho_\infty(x) +\lambda |x|^2\Big) dx = \frac{\lambda K_{d,s}}{4}\int_{\mathbb{R}^d}\!
 (R^2-|x|^2)^{1-s}_+\!\left(
\frac{d}{d-2s}R^2+|x|^2\right)dx \cr
&= \frac{\lambda K_{d,s}}{4}\frac{d\pi^{d/2}(d+2-2s)\Gamma(2-s)}
{(d-2s)\Gamma(d/2+3-s)}R^{d+4-2s}
=  \tilde{K}_{d,s} \left(\int_{\mathbb{R}^d} \rho(x)dx\right)^{\frac{d+4-2s}{d+2-2s}},
\end{align*}
where~\eqref{eq:MR} is used in the last step, together with the constant
\[
\tilde{K}_{d,s}=\frac{d(d+2-2s)^{(d+4-2s)/(d+2-2s)}\lambda^{(d-2s)/(d+2-2s)}}
{(d-2s)(d+4-2s)2^{(d+2-s)/(d+2-2s)}\pi^{d/(d-2-2s)}}.
\]

Therefore,  ~\eqref{LSI} is reduced to an inequality bounding the integral $\int \rho (-\Delta)^{-s}\rho\,dx$
by $\int \rho\,dx$ and $\int \rho |\nabla (-\Delta)^{-s}\rho|^2\,dx$, that is,
\[
\lambda(d+1-2s)\int_{\mathbb{R}^d}\rho(x)(-\Delta)^{-s} \rho(x)dx
\leq 2\lambda \tilde{K}_{d,s} \left(\int_{\mathbb{R}^d} \rho(x)dx\right)^{\frac{d+4-2s}{d+2-2s}}+
\int_{\mathbb{R}^d} \rho(x)|\nabla(-\Delta)^{-s}\rho(x)|^2dx,
\]
where the equality holds for the steady state $\rho_\infty$. In general, it is easier to prove the equivalent inequality in the
``product form''
\begin{equation}\label{eq:fracGNS}
\int_{\mathbb{R}^d} \rho (-\Delta)^{-s}\rho\,dx
\leq C\left(\int_\Rd \rho \,dx\right)^{2-3\theta}
\left( \int_\Rd \rho|\nabla(-\Delta)^{-s}\rho|^2 \, dx\right)^{\theta},
\end{equation}
where $\theta =  \frac{d-2s}{2d+2-4s}$ is determined by the homogeneity and $C$ is given
by  any function $\rho(x) = A(R^2-|x-x_0|^2)_+^{1-s}$ (which is independent of $A$, $R$ and $x_0$).

However, unlike the case of porous medium equation~\cite{dolbeault},
we can not prove~\eqref{eq:fracGNS} to
establish the log-Sobolev inequality~\eqref{LSI}. The main difficulty lies in the integral $\int_\Rd \rho|\nabla(-\Delta)^{-s}\rho|^2$,
where basic questions like monotonicity under symmetric decreasing rearrangement are not clear.
Because of the equivalence between~\eqref{LSI} and~\eqref{eq:fracGNS}, we will show that~\eqref{eq:fracGNS} holds in one dimension and it is a consequence of the HWI inequalities, but it remains an open problem to prove or disprove~\eqref{eq:fracGNS} in higher dimensions.

To summarize, provided the required regularity of the solutions in the formal calculation in manipulating $\R(\rho)$, the exponential convergence
of solutions to~\eqref{eq:fracPMESM} is expected only in one dimension,
which the equivalent inequality~\eqref{eq:fracGNS} can not be
proved at this moment. The convergence in one dimension will be
established more rigorously in the next two sections, by showing
an even more general HWI inequality related the displacement convexity
of the energy.



\section{Transport inequalities}\label{TranspIneq}

In this section, we derive several inequalities originated from
optimal transportation theory that will be used in the next section to show
the exponential convergence of the relative
entropy in one dimension. Besides  $\mathcal{E}(\rho)$
and  $\mathcal{I}(\rho)$ introduced earlier, we also
need the following versions of the energy and energy dissipation of a measure $\rho\in \mathcal{P}_{2,ac}(\R)$:
\begin{align*}
\E_\varepsilon(\rho) &:= \E(\rho) +\varepsilon \int_{\mathbb{R}} \rho \log \rho\,, \\
\I_\varepsilon(\rho) &:= \int_{\mathbb{R}} \left| \partial_x(-\partial_{xx})^{-s}\rho(x)+ \lambda x +\varepsilon\partial_x\log \rho(x)  \right|^2 d\rho(x)\,,
\end{align*}
which are associated to the regularized equation~\eqref{FPME} in the next section. Throughout this and the next sections we shall commit an abuse of notation and identify every absolutely continuous measure with its density. So we shall write $d\rho(x)$ and $\rho(x)dx$ meaning the same thing.

We use optimal transport techniques to prove the Log-Sobolev, the Talagrand, and the HWI inequalities for the energy $\E_\varepsilon$ for smooth probability measures $\rho \in \mathcal{P}_{2,ac}(\R)$.  We shall focus on the so called HWI inequality that generalizes
certain elementary inequalities for convex functions on $\mathbb{R}^d$ with
Euclidean distance replaced by the Wasserstein distance on $\mathcal{P}_2(\R)$ (the space of probability measures with finite second moment).
The Wasserstein distance on $\mathcal{P}_2(\R)$ is defined for any $\rho_1, \rho_2 \in \mathcal{P}_2(\R)$ by
\[
W_2(\rho_1, \rho_2) := \left( \inf_{\pi \in \Pi(\rho_1,\rho_2)} \int_{\R\times\R} |x-y|^2\;d\pi(x,y)  \right)^{\frac{1}{2}}\;,
\]
where $\Pi(\rho_1,\rho_2)$ be the set of all nonnegative Radon measures on $\R\times\R$ with marginals (projections) $\rho_1$ and $\rho_2$.
The HWI inequality is called so because it was first established in~\cite{ottovillani} for the relative Kullback information (denoted by $H$), the Wasserstein distance $W_2$ and the relative Fisher information (also denoted by $I$).

Before stating the main results, let us briefly review a few facts about the Wasserstein distance and the weak convergence in $\mathcal{P}_2(\R)$ that shall be used in the proofs.
\begin{itemize}
  \item We say that the a sequence $(\rho_n)_{n\in\mathbb{N}}\subseteq \mathcal{P}_2(\R)$ weakly converges to $\rho\in\mathcal{P}(\R)$ (denoted as $\rho_n \rightharpoonup \rho$), if
  \[\lim_{n\to\infty}\int\varphi(x) \;d\rho_n(x) = \int\varphi(x) \;d\rho(x)\;,\;\; \]
  for all $\varphi \in C_b(\R)$, the space of bounded and continuous functions.
  \item The pair $(\mathcal{P}_2(\R),W_2)$ is a complete metric space and the convergence under the distance $W_2$ is stronger than the convergence in the weak sense. In fact, the following facts are equivalent for any $(\rho_n)_{n\in\mathbb{N}}\subseteq \mathcal{P}_2(\R)$ and $\rho \in \mathcal{P}(\R)$:
      \begin{itemize}
      \item[i)] $W_2(\rho_n,\rho)\rightarrow 0$ as $n \rightarrow +\infty$;
      \item[ii)] $\rho_n \rightharpoonup \rho$ and
      \begin{equation}\label{limsup}
      \lim_{n\to\infty}\int x^2\;d\rho_n(x) = \int x^2\;d\rho(x);
      \end{equation}
      \item[iii)] $\rho_n \rightharpoonup \rho$ and
      \begin{equation*}
      \lim_{R\to\infty} \limsup_{n\to\infty} \int_{|x|\maig R} x^2\;d\rho_n(x) = 0.
      \end{equation*}
      \end{itemize}
  \item Given $\rho_1, \rho_2 \in \mathcal{P}_2(\R)$ with $\rho_1$ absolutely continuous with respect to the Lebesgue measure, there exists a Borel map $\theta:\R \rightarrow \R$ such that $\theta\#\rho_1 = \rho_2$, i.e.,
      \[\int_\R \varphi(x) \;d\rho_2(x) = \int_\R \varphi(\theta(x))\;d\rho_1(x), \mbox{ for every bounded Borel function } \varphi,\]
      and $\theta$ also satisfies
      \[W_2(\rho_1,\rho_2) = \left( \int_\R |x-\theta(x)|^2\;d\rho_1(x)  \right)^{\frac{1}{2}}\;,\]
It is well known that the optimal map $\theta$ is nondecreasing on $\R$ and increasing on $\supp(\rho_1)$.
\end{itemize}

For a detailed proof of the above results and generalizations, the reader may check the standard references~\cite{ambrosiogiglisavare} and~\cite{villani}. Now, let us begin with the following technical lemma about the gradient of the
Riesz potential in general dimension $d$.

\begin{lemma}
Let $0<s\meni 1$ and $\rho \in L^1(\R^d)\cap L^\infty(\R^d)\cap C^\alpha(\R^d)$ with $\alpha>\max(1-2s,0)$. Then $(-\Delta)^{-s}\rho \in C^1(\R^d)$ and for any $x \in \R^d$,
\[ \nabla(-\Delta)^{-s}\rho(x) =  -c_{d,s}(d-2s) \int_{\R^d} \frac{x-y}{|x-y|^{d+2-2s}}\Big(\rho(y)-\rho(x) \Big)\;dy \;,\;\; \mbox{ if } s \in (0, 1/2] \]
or
\[ \nabla(-\Delta)^{-s}\rho(x) =  -c_{d,s}(d-2s) \int_{\R^d} \frac{x-y}{|x-y|^{d+2-2s}}\rho(y)\;dy \;,\;\; \mbox{ if } s\in (1/2,1]. \]
\end{lemma}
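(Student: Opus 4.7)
The plan is to differentiate the Riesz representation $(-\Delta)^{-s}\rho(x)=c_{d,s}\int_{\Rd}|x-y|^{2s-d}\rho(y)\,dy$ through difference quotients, splitting according to whether the formal gradient of the kernel $W(y)=c_{d,s}|y|^{2s-d}$ is locally integrable. In both regimes the endpoint is an application of dominated convergence on a decomposition of $\Rd$ into a near region around $x$ and a far region.

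In the regime $s\in(1/2,1]$, the pointwise derivative $\nabla W(y)=-c_{d,s}(d-2s)\,y\,|y|^{2s-d-2}$ satisfies $|\nabla W(y)|\meni C|y|^{2s-d-1}$, which is locally integrable since $2s-d-1>-d$. I would fix a direction $e_i$, apply the mean value theorem to $h^{-1}[W(x+he_i-y)-W(x-y)]$ for $y$ away from $x$ and $x+he_i$, and dominate the integrand, for all $|h|$ sufficiently small, by a multiple of $\|\rho\|_\infty|y-x|^{2s-d-1}\mathbf{1}_{|y-x|\meni 2}+\|\rho\|_1\mathbf{1}_{|y-x|\maig 2}$, which is integrable in $y$. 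Dominated convergence produces the claimed unsubtracted formula, and a second application (varying $x$) yields continuity of the gradient. No use of the Hölder assumption is needed in this case.

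In the critical regime $s\in(0,1/2]$, $\nabla W$ is no longer locally integrable, and the subtracted form is essential. I would first prove that the candidate
\[
G(x):=-c_{d,s}(d-2s)\int_{\Rd}\frac{x-y}{|x-y|^{d+2-2s}}\bigl(\rho(y)-\rho(x)\bigr)\,dy
\]
is absolutely convergent and continuous in $x$: near $y=x$ the Hölder bound $|\rho(y)-\rho(x)|\meni[\rho]_{C^\alpha}|x-y|^\alpha$ gives an integrand majorized by $C|x-y|^{2s-d-1+\alpha}$, which is integrable precisely because of the standing assumption $\alpha>1-2s$; far from $x$ the $\|\rho\|_\infty$ and $\|\rho\|_1$ bounds suffice. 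To identify $G$ with the actual gradient of $(-\Delta)^{-s}\rho$, I would mollify $\rho_\varepsilon=\rho\ast\eta_\varepsilon\in C^\infty\cap L^1\cap L^\infty$ with $[\rho_\varepsilon]_{C^\alpha}\meni[\rho]_{C^\alpha}$, compute $\nabla(-\Delta)^{-s}\rho_\varepsilon$ classically (the convolution is smooth), rewrite the integrand by adding and subtracting $\rho_\varepsilon(x)$ inside the kernel, and pass to the limit $\varepsilon\to 0^+$ using the uniform majorant above.

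The main technical obstacle is justifying the cancellation that converts the unsubtracted integral $\int\nabla W(x-y)\rho_\varepsilon(x)\,dy$ into zero: since $\nabla W$ is not integrable, this cannot be done by Fubini, and one must truncate around the singularity (say, restricting to $\{|x-y|\maig\delta\}\cap\{|x-y|\meni R\}$ and using the radial antisymmetry of $\nabla W$ on annuli) and then send $\delta\to 0$, $R\to\infty$, checking that the truncation errors vanish in the limit. Once this is done uniformly in $\varepsilon$, dominated convergence transfers the identity $\nabla(-\Delta)^{-s}\rho_\varepsilon = G_\varepsilon$ into the desired $\nabla(-\Delta)^{-s}\rho = G$, and the sharpness of the condition $\alpha>1-2s$ appears exactly as the threshold that makes $G$ absolutely convergent.
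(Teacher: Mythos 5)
Your strategy is sound and reaches the same conclusion, but the regularization is genuinely different from the paper's. The paper never mollifies $\rho$: it truncates the \emph{kernel}, replacing $k_{d,s}$ by $k_{d,s}\eta_\varepsilon$ with a radial cutoff $\eta_\varepsilon$ vanishing on $\{|x|\le\varepsilon\}$, sets $p_\varepsilon=(k_{d,s}\eta_\varepsilon)\ast\rho$, and shows $p_\varepsilon\to p$ uniformly (rate $\varepsilon^{2s}$) and $\nabla p_\varepsilon\to \mathbf{u}_{d,s}$ uniformly (rate $\varepsilon^{\alpha+2s-1}$ in the subtracted case, $\varepsilon^{2s-1}$ in the unsubtracted one), concluding $p\in C^1$ with $\nabla p=\mathbf{u}_{d,s}$ by the classical uniform-convergence criterion. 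The advantage of truncating the kernel is exactly that the cancellation you single out as the main obstacle disappears: $\nabla(k_{d,s}\eta_\varepsilon)$ is an odd function, integrable at infinity when $s\le 1/2$, so the subtracted representation of $\nabla p_\varepsilon$ is immediate and all errors are supported in $\{|x-y|\le 2\varepsilon\}$, where the H\"older seminorm gives a bound uniform in $x$; no principal-value/integration-by-parts argument and no second limit in a regularization parameter are needed. Your route (mollify the density, identify $\nabla(-\Delta)^{-s}\rho_\varepsilon$ with $G_\varepsilon$ via the p.v.\ cancellation on annuli, then pass $\varepsilon\to0$ with the uniform majorant coming from $[\rho_\varepsilon]_{C^\alpha}\le[\rho]_{C^\alpha}$) can be made to work, but it carries two extra bookkeeping burdens you should make explicit: (i) the $\varepsilon\to0$ step needs locally uniform (not just pointwise) convergence of $G_\varepsilon\to G$ together with $p_\varepsilon\to p$, which is where the uniform H\"older/$L^1$/$L^\infty$ bounds enter; and (ii) in the unsubtracted regime your mean-value-theorem domination fails pointwise on the small set $\{|y-x|\lesssim|h|\}$ (there $|\xi-y|^{2s-d-1}$ is not controlled by $|x-y|^{2s-d-1}$), so you should use the integral form of the difference quotient, $h^{-1}[W(x+he_i-y)-W(x-y)]=\int_0^1\partial_iW(x+the_i-y)\,dt$, plus continuity of $\partial_iW\ast\rho$, rather than a pointwise majorant. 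Two further points of contrast with the paper: it disposes of $s=1$, $d\ge2$ by citing the Newtonian-potential lemma of Gilbarg--Trudinger, and it treats $d=1$, $s=1/2$ as a separate case because there the kernel is logarithmic (the constant $c_{d,s}(d-2s)$ degenerates), with the same scheme but explicit $\varepsilon|\log\varepsilon|$-type estimates; your write-up should at least acknowledge that endpoint convention, even though the subtracted power-type derivative formula you use is the correct one there.
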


\begin{proof}
For $s=1$ and $d\maig 2$ this result is in \cite[Lemma 4.1]{MR1814364} for the Newtonian potential, and one only needs $\rho \in L^\infty(\R^d)\cap L^1(\R^d)$ in order to have $(-\Delta)^{-1}\rho \in C^1(\R^d)$ with
\[\nabla(-\Delta)^{-1}\rho(x) =  -c_{d,1}(d-2) \int_{\R^d} \frac{x-y}{|x-y|^{d}}\rho(y)\;dy. \]
So let us assume that $s \in (0,1/2]$ if $d \maig 2$ and $s \in (0,1/2)$ if $d =1$. To simplify the notation, we write $k_{d,s}(x) := c_{d,s}|x|^{2s-d}$. Hence, we note that under the hypothesis on $\rho$, we have that
\[\textbf{u}_{d,s}(x) := -c_{d,s}(d-2s) \int_{\R^d} \frac{(x-y)}{|x-y|^{d+2-2s}}\Big(\rho(y)-\rho(x) \Big)\;dy = \nabla k_{d,s} \ast (\rho - \rho(x))\]
is well defined for all $x \in \R^d$.

Now, let $\eta \in C^1(\R^d)$ be a radial function such that $0\meni \eta \meni 1$, $\eta(x) = 0$ if $|x|\meni 1$, $\eta(x) = 1$ if $|x|\maig 2$ and $|\nabla\eta|\meni 2$. Define $\eta_\varepsilon(x) := \eta (\varepsilon^{-1}x)$ and
\begin{align*}
p(x)&:= (-\Delta)^{-s}\rho(x) = k_s\ast\rho (x) \\
p_\varepsilon (x) &:= (k_{d,s}\eta_\varepsilon)\ast\rho (x)
\end{align*}
Since $\rho$ is bounded, we have that $p \rightarrow p_\varepsilon$ uniformly on $\R^d$ as
\begin{align*}
|p(x)-p_\varepsilon(x)| &\meni \int_{|x-y|\meni 2\varepsilon} k_{d,s}(x-y)\big(1-\eta_\varepsilon(x-y)\big)\rho(y)\;dy \\
&\meni \|\rho\|_\infty \int_{|y|\meni 2\varepsilon}\frac{1}{|y|^{d-2s}}\;dy = C\|\rho\|_\infty \varepsilon^{2s}
\end{align*}
for all $x \in \R^d$, where $C$ depends on $d$ and $s$.

By the smoothness of $k_{d,s}\eta_\varepsilon$ we know that $p_\varepsilon \in C^1$ and $\nabla p_\varepsilon (x) = \nabla(k_{d,s}\eta_\varepsilon)\ast\rho (x)$, and since $k_{d,s}\eta_\varepsilon$ is radial, we can write
\[\nabla p_\varepsilon (x) = \int_{\mathbb{R}^d} \nabla(k_{d,s}\eta_\varepsilon)(x-y)\Big(\rho(y)-\rho(x)\Big)\;dy\,.
\]
Therefore,
\begin{align}
|\textbf{u}_{d,s}(x) - \nabla p_\varepsilon (x)| &= \left| \int_{|x-y|\meni 2\varepsilon} \nabla(k_{d,s} (1-\eta_\varepsilon) )(x-y)\Big(\rho(y)-\rho(x) \Big)\;dy \right| \nonumber \\
&\meni \int_{|x-y|\meni 2\varepsilon} \Big(|\nabla k_{d,s}(x-y)||1-\eta_\varepsilon(x-y)|+ k_{d,s}(x-y)|\nabla\eta_\varepsilon(x-y)| \Big)\Big|\rho(y)-\rho(x) \Big|\;dy \nonumber \\
&\meni \int_{|x-y|\meni 2\varepsilon} \Big(\frac{c_{d,s}(d-2s)}{|x-y|^{d+1-2s}}+ \frac{2}{\varepsilon}\frac{c_{d,s}}{|x-y|^{d-2s}} \Big)\Big|\rho(y)-\rho(x) \Big|\;dy \label{ApIne}\\
&\meni C \int_{|x-y|\meni 2\varepsilon} \left(\frac{1}{|x-y|^{d+1-2s-\alpha}} + \frac{1}{\varepsilon}\frac{1}{|x-y|^{d-2s-\alpha}}\right)\;dy\nonumber\\
&\meni C_1\varepsilon^{\alpha+2s-1},\nonumber
\end{align}
where the constant $C_1$ only depends on $d$, $s$, $\alpha$ and on the H\"{o}lder constant of $\rho$. Thus, we also have that $\nabla p_\varepsilon$ converges uniformly to $\textbf{u}_{d,s}$ as $\varepsilon\rightarrow 0$, and therefore $\nabla p = \textbf{u}_{d,s}$.

Now, if $s \in (1/2,1)$ and $d \maig 2$ or $s \in (1/2,1]$ and $d =1$, we only need to adapt the argument in formula~\eqref{ApIne} for the function
\[\textbf{u}_{d,s}(x) := -c_{d,s}(d-2s) \int_{\R^d} \frac{x-y}{|x-y|^{d+2-2s}}\rho(y)\;dy = \nabla k_{d,s} \ast \rho \]
and using that $\nabla p_\varepsilon = \nabla (k_{d,s}\eta_\varepsilon)\ast \rho$ in the following way
\begin{align*}
|\textbf{u}_{d,s}(x) - \nabla p_\varepsilon (x)| &= C\norm{\rho}_\infty \int_{|x-y|\meni 2\varepsilon} \left(\frac{1}{|x-y|^{d+1-2s}} + \frac{1}{\varepsilon}\frac{1}{|x-y|^{d-2s}}\right)\;dy\\
&= C_2\varepsilon^{2s-1},
\end{align*}
where the constant $C_2$ only depends on $d$, $s$ and on the $L^\infty$ norm of $\rho$.

Finally, if $d=1$ and $s=1/2$ we have that
\[(-\partial_{xx})^{-\frac{1}{2}}\rho(x) = -c_{1,\frac{1}{2}}\int_\R \log|x-y|\rho(y)\;dy\]
and
\[ \textbf{u}_{1,\frac{1}{2}}(x) = c_{1,\frac{1}{2}} \int_{\R^d} \frac{(x-y)}{|x-y|^{2}}\Big(\rho(y)-\rho(x) \Big)\;dy.  \]
Arguing as above for $k_{1,\frac{1}{2}}(x) := -c_{1,\frac{1}{2}}\log|x|$ we arrive at the following estimates:
\begin{align*}
|p(x)-p_\varepsilon(x)| &\meni \|\rho\|_\infty \int_{|y|\meni 2\varepsilon}\big|\log|y|\big|dy = C\|\rho\|_\infty \varepsilon\big(\big|\log{2\varepsilon}\big| +1\big)
\end{align*}
and
\begin{align*}
|\textbf{u}_{1,\frac{1}{2}}(x) -  p'_\varepsilon (x)| &\meni C \int_{|x-y|\meni 2\varepsilon} \Big(\frac{1}{|x-y|}+ \frac{1}{\varepsilon}\big|\log|x-y|\big| \Big)\Big|\rho(y)-\rho(x) \Big|\;dy \\
&\meni C \int_{|x-y|\meni 2\varepsilon} \Big(\frac{1}{|x-y|^{1-\alpha}}+ \frac{1}{\varepsilon}|x-y|^{\alpha}\big|\log|x-y|\big| \Big)\;dy \\
&\meni C\varepsilon^{\alpha}\Big(1+\varepsilon + \varepsilon\big|\log{2\varepsilon}\big| \Big).
\end{align*}
Therefore, since all these estimates are uniform in $x$, we conclude that the lemma is true for all $s \in (0,1]$ and $d\maig 1$.
\end{proof}

\begin{remark}\label{remark1}
{\rm With this expression for the derivative of $(-\Delta)^{-s}\rho$ for $s < \frac{1}{2} $, we obtain the following equality that shall be used in the next proposition:
\begin{align*}
\frac{\nabla(-\Delta)^{-s}\rho(x)}{c_{d,s}(2s-d)} &=  \lim_{r\to 0}\int_{|x-y|\maig r} \frac{x-y}{|x-y|^{d+2-2s}}\Big(\rho(y)-\rho(x) \Big)\;dy \\
&=  \lim_{r\to 0}\int_{|x-y|\maig r} \frac{x-y}{|x-y|^{d+2-2s}}\rho(y)\;dy- \lim_{r\to 0}\rho(x)\int_{|x-y|\maig r}\frac{x-y}{|x-y|^{d+2-2s}}\;dy\\
&= \lim_{r\to 0}\int_{|x-y|\maig r} \frac{x-y}{|x-y|^{d+2-2s}}\rho(y)\;dy,
\end{align*}
where we only used the fact that $k_s$ is radial and $\nabla k_s$ is integrable at the infinity. For $s
> \frac{1}{2}$, the expression is valid without taking the limit, as the kernel is locally integrable.}
\end{remark}

The next proposition shows that the HWI inequality holds for $\E$ and $\E_\varepsilon$ at least for a class of bounded and H\"older continuous functions on $\R$. The proof follows the arguments given in~\cite{ledoux} where the same inequality is proved for the case of the logarithmic interaction and strongly relies on the fact that the optimal transport map w.r.t the Wasserstein distance is a monotone nondecreasing function on $\R$.  We point out that the convexity of the confinement due to the drift measured by $\lambda>0$ appears explicitly in the inequalities as in \cite{MR2053570}.

\begin{theorem}\label{HWI}
Let $s\in (0,1]$, $\lambda \in \R$, $\rho\in L^1(\R)\cap L^\infty(\R)\cap C^\alpha(\R)$ nonnegative where $\alpha >\max(1-2s,0)$ and with $\int\rho = 1$, and $\rho_\infty$ the minimum point of $\E$ on $\mathcal{P}^{2}(\R)$. Then
\[\E(\rho)-\E(\rho_\infty) \meni \sqrt{\I(\rho)}W_2(\rho,\rho_\infty) - \frac{\lambda}{2} W_2^2(\rho,\rho_\infty).\]
\end{theorem}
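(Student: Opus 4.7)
The plan is to follow the Otto--Villani strategy and derive the inequality as a consequence of the $\lambda$-displacement convexity of $\E$ along Wasserstein geodesics, combined with a Cauchy--Schwarz bound on the initial slope. Let $\theta:\R\to\R$ be the monotone nondecreasing optimal transport map pushing $\rho$ onto $\rho_\infty$, set $\theta_t(x):=(1-t)x+t\theta(x)$, and define $\rho_t:=\theta_t\#\rho$. Because $\rho$ is absolutely continuous, $(\rho_t)_{t\in[0,1]}$ is the constant-speed geodesic joining $\rho$ to $\rho_\infty$, with $W_2(\rho_t,\rho_s)=|t-s|\,W_2(\rho,\rho_\infty)$. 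The crucial one-dimensional feature is the monotonicity of $\theta$: for $x>y$ the quantity $\theta_t(x)-\theta_t(y)=(1-t)(x-y)+t(\theta(x)-\theta(y))$ is affine in $t$ and nonnegative.

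The core step is to show that $\E$ is $\lambda$-displacement convex along this geodesic:
\[
\E(\rho_t)\leq (1-t)\E(\rho)+t\E(\rho_\infty)-\tfrac{\lambda}{2}t(1-t)W_2^2(\rho,\rho_\infty),\qquad t\in[0,1].
\]
Split $\E=\W+\V$ with $\V(\mu)=\tfrac{\lambda}{2}\int x^2\,d\mu$ and $\W$ the Riesz interaction. The confinement part equals $\V(\rho_t)=\tfrac{\lambda}{2}\int\theta_t(x)^2\,d\rho(x)$, which is exactly $\lambda W_2^2$-convex as a quadratic polynomial in $t$. For the interaction,
\[
\W(\rho_t)=\tfrac{c_{1,s}}{2}\iint |\theta_t(x)-\theta_t(y)|^{2s-1}\,d\rho(x)d\rho(y),
\]
(with the logarithm replacing the power when $s=1/2$), the claim reduces to the convexity of the scalar map $t\mapsto c_{1,s}\,\phi(t)^{2s-1}$ on $(0,\infty)$, where $\phi(t)=\theta_t(x)-\theta_t(y)\geq 0$ is affine. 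This scalar function is convex in all three regimes: for $s<1/2$ both $c_{1,s}>0$ and $r^{2s-1}$ is convex on $(0,\infty)$; for $s=1/2$ the kernel is $-\pi^{-1}\log r$, which is convex; for $s>1/2$ the coefficient $c_{1,s}<0$ compensates the concavity of $r^{2s-1}$. It is precisely the ability to cast the difference $\theta_t(x)-\theta_t(y)$ as a nonnegative scalar that limits the argument to one dimension.

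Granted the $\lambda$-displacement convexity, dividing by $t$ and letting $t\to0^+$ yields
\[
\frac{d}{dt}\E(\rho_t)\Big|_{t=0^+}\leq \E(\rho_\infty)-\E(\rho)-\tfrac{\lambda}{2}W_2^2(\rho,\rho_\infty).
\]
Differentiating term by term, using the preceding Lemma to identify $\partial_x(-\partial_{xx})^{-s}\rho$ as the singular integral and symmetrising in $(x,y)$, the left-hand side equals $\int(\theta(x)-x)\,\xi'(x)\,d\rho(x)$, where $\xi=(-\partial_{xx})^{-s}\rho+\tfrac{\lambda}{2}x^2$ is the variational derivative of $\E$. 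Cauchy--Schwarz then gives
\[
-\int(\theta(x)-x)\,\xi'(x)\,d\rho(x)\leq \Bigl(\int|\theta(x)-x|^2\,d\rho\Bigr)^{1/2}\Bigl(\int|\xi'|^2\,d\rho\Bigr)^{1/2}=\sqrt{\I(\rho)}\,W_2(\rho,\rho_\infty),
\]
and rearranging produces the stated HWI inequality. The main technical obstacle will be rigorously justifying both the convexity inequality and the one-sided derivative at $t=0^+$ for the singular nonlocal term: one must verify that $\W(\rho_t)$ is finite and differentiable in $t$, controlling the diagonal contribution where $|\theta_t(x)-\theta_t(y)|^{2s-1}$ is singular (for $s<1/2$) via the Hölder hypothesis $\alpha>\max(1-2s,0)$ and a principal-value/truncation argument, and similarly passing the derivative inside the integrals by dominated convergence using the bounds from the Lemma and the Remark.
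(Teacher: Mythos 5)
Your proposal is correct and rests on the same two pillars as the paper's own proof --- the monotonicity of the one-dimensional optimal map $\theta$ together with the convexity of $r\mapsto c_{1,s}r^{2s-1}$ (resp.\ the logarithmic kernel) on $(0,\infty)$, plus Cauchy--Schwarz for the dissipation term --- but it is organized differently. The paper never constructs the displacement interpolation: it adds and subtracts terms to write $\sqrt{\I(\rho)}\,W_2-\tfrac{\lambda}{2}W_2^2-\E(\rho)+\E(\rho_\infty)=T_1+T_2+T_3$, gets $T_1\maig 0$ by Cauchy--Schwarz, $T_2=0$ as an exact quadratic identity (your observation that the confinement is exactly quadratic along the interpolation is the same fact), and $T_3\maig 0$ by integrating the \emph{endpoint} tangent-line inequality $k_s(\theta(x)-\theta(y))\maig k_s(x-y)+k_s'(x-y)\big(\theta(x)-\theta(y)-x+y\big)$ over truncated regions $|x-y|\maig r$, using the principal-value representation of $K\rho$ from the preceding Lemma and Remark and the nonnegativity of the remainder to pass to the limit $r\to 0$. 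Your route instead proves the stronger intermediate statement of $\lambda$-displacement convexity along the whole geodesic and then differentiates at $t=0^+$; that buys a result of independent interest, but the differentiation of the nonlocal term at $t=0^+$ is precisely where your plan costs more than the paper's: near the diagonal $k_s'(x-y)\sim |x-y|^{2s-2}$ is not absolutely integrable against $\rho\otimes\rho$ when $s\meni 1/2$, and $\theta$ may jump, so plain dominated convergence will not justify passing the derivative inside; you would need the symmetrized principal-value structure and the sign of the convexity remainder (or the monotonicity of difference quotients of convex functions) to get the one-sided bound in the right direction. Observe finally that the convexity inequality you invoke in the limit $t\to 0^+$ is nothing but that endpoint tangent-line inequality, so the geodesic interpolation can be dispensed with altogether --- which is exactly the shortcut the paper takes.
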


\begin{proof}
For $s=1/2$ this result was proven at~\cite{ledoux}. So, let us suppose that $s\in(0,1/2)$ and, to simplify, let us denote $K\rho(x) = \partial_x(-\partial_{xx})^{-s}\rho(x)$.
Since $\rho$ is absolutely continuous with respect to the Lebesgue measure, there exists an nondecreasing transport map $\theta$ such that $\theta\#\rho=\rho_\infty$.

Then, let us write
\[\sqrt{\I(\rho)}W_2(\rho,\rho_\infty) - \frac{\lambda}{2} W_2^2(\rho,\rho_\infty) -\E(\rho)+\E(\rho_\infty)  = T_1 + T_2 + T_3\]
where
\begin{align*}
T_1 := &\left( \int \Big| K\rho(x)+ \lambda x  \Big|^2 d\rho(x) \right)^{1/2}\left(\int|x-\theta(x)|^2d\rho(x)\right)^{1/2} \\ &-\int\Big(K\rho(x)+\lambda x\Big)\left(x-\theta(x)\right)d\rho(x)
\end{align*}
\begin{equation*}
T_2 := \int \Big\{\lambda x(x-\theta(x)) - \frac{\lambda}{2}x^2 + \frac{\lambda}{2}\theta(x)^2 -\frac{\lambda}{2}|x-\theta(x)|^2 \Big\}d\rho(x)
\end{equation*}
\[T_3 :=  \frac{c_{1,s}}{2}\int\frac{d\rho(x)d\rho(y)}{|\theta(x)-\theta(y)|^{1-2s}} - \frac{c_{1,s}}{2}\int\frac{d\rho(x)d\rho(y)}{|x-y|^{1-2s}} - \int K\rho(x)(\theta(x)-x)d\rho(x) \,,
\]
 where we added and subtracted several terms. This allows us to show that $T_1\maig 0$ by the Cauchy-Schwarz inequality and $T_2 = 0$ for all $\lambda \in \R$. Now, for $T_3$ let us call $k_s(x) = c_{1,s}|x|^{2s-1}$. Then, by the Remark~\ref{remark1}
\[K\rho(x) = \lim_{r\to 0}\int_{|y-x|\maig r}k_s'(x-y)d\rho(y)  \]
And, since $k_s'(x) = -k_s'(-x)$, we can write
\begin{align*}
\int K\rho(x)&\big(\theta(x)-x\big)d\rho(x) \\
&= \lim_{r\to 0}\int_{|y-x|\maig r} \big(\theta(x)-x\big)k_s'\big(x-y\big)d\rho(y)d\rho(x)\\
&= \frac{1}{2}\lim_{r\to 0}\int_{|y-x|\maig r} \big(\theta(x)-\theta(y)-x+y)k_s'\big(x-y\big)d\rho(y)d\rho(x)
\end{align*}
Furthermore,
\[c_{1,s}\int \frac{d\rho(x)d\rho(y)}{|x-y|^{1-2s}} = \lim_{r\to 0}\int_{|y-x|\maig r} k_s(x-y)d\rho(x)d\rho(y) \]
\[c_{1,s}\int \frac{d\rho(x)d\rho(y)}{|\theta(x)-\theta(y)|^{1-2s}} = \lim_{r\to 0}\int_{|y-x|\maig r} k_s(\theta(x)-\theta(y))d\rho(x)d\rho(y) \]
and then,
\[
T_3 = \lim_{r\to 0}\frac{1}{2} \int \Big\{k_s\big(\theta(x)-\theta(y)\big) -  k_s(x-y) - k_s'\big(\theta(x)-\theta(y)\big)\big(\theta(x)-\theta(y)-x+y\big)    \Big\}d\rho(x)d\rho(y)
\]
The integrand is nonnegative by the convexity of $k_s$ on the positive real line and by the monotonicity of $\theta$, so $T_3\maig 0$ as well.

If $s \in (1/2,1]$, we still have $k_s(x) = c_{1,s}|x|^{2s-1}$ convex because $c_{1,s}$ is negative in this range. Thus, the previous computations  still apply.
\end{proof}

\noindent {\bf Remarks.} 1) It is known that, if the HWI inequality holds for some $\lambda >0$, then the Log-Sobolev inequality also holds. One just needs to maximize the right-hand side for $W_2 \maig 0$ or use the Young's inequality for $(\lambda^{-\frac{1}{2}} \sqrt{\I}) (\lambda^\frac{1}{2}W_2)$. Then we have that
\begin{equation}\label{LogSobolev}
\E(\rho)-\E(\rho_\infty) \meni \frac{1}{2\lambda}\I(\rho),
\end{equation}
for all $\rho$ satisfying the assumptions of the theorem above.

2) Note that in the proof of the Theorem~\ref{HWI} we did not use the fact that $\rho_\infty$ is the minimum of $\E$, only its regularity. In fact, the same inequality holds for any $\rho_0$ in the place of $\rho_\infty$, and also with $\rho_\infty$ in the place of $\rho$, because $\rho_\infty \in L^\infty(\R)\cap C^{1-s}(\R)$, which allows the existence of $\theta$. Therefore, if we exchange $\rho$ and $\rho_\infty$ in the HWI we obtain the fractional version of the so called Talagrand inequality or transportation cost inequality
\begin{equation}\label{Talagrand}
W_2(\rho,\rho_\infty) \meni \sqrt{\frac{2}{\lambda}\Big( \E(\rho)-\E(\rho_\infty) \Big). }
\end{equation}

We can derive similar results for the $\varepsilon$ problems.

\begin{proposition}\label{HWI-e}
Let $s \in (0,1]$, $\lambda >0$, $0<\varepsilon<\lambda/2\pi$, $\rho\in L^1(\R)\cap L^\infty(\R)\cap C^\alpha(\R)$ nonnegative where $\alpha >1-2s$ and with $\int\rho = 1$, and $\rho_\infty^\varepsilon$ the minimum point of $\E_\varepsilon$ on $\mathcal{P}_{2}(\R)$. Then
\[\E_\varepsilon(\rho)-\E_\varepsilon(\rho_\infty^\varepsilon) \meni \sqrt{\I_\varepsilon(\rho)}W_2(\rho,\rho^\varepsilon_\infty) - \frac{\lambda}{2} W_2^2(\rho,\rho_\infty^\varepsilon)\]
\end{proposition}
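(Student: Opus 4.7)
The plan is to follow the proof of Theorem~\ref{HWI} essentially verbatim on the interaction and confinement pieces, producing three terms with exactly the same signs, and to absorb the new Boltzmann piece $\varepsilon\int\rho\log\rho$ in a fourth term that is nonnegative by McCann's displacement convexity of the Boltzmann entropy.

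First I would collect the needed properties of the minimizer $\rho_\infty^\varepsilon$: existence in $\mathcal{P}_{2,ac}(\R)$ (absolute continuity being forced by finiteness of $\int\rho_\infty^\varepsilon\log\rho_\infty^\varepsilon$), finite second moment, and enough regularity (bounded, H\"older continuous, positive on its support) so that the unique nondecreasing optimal transport map $\theta:\R\to\R$ with $\theta\#\rho=\rho_\infty^\varepsilon$ is strictly increasing on $\supp(\rho)$. The smallness assumption $\varepsilon<\lambda/(2\pi)$ is the natural Gaussian-type threshold and is expected to enter precisely in this existence/integrability step, to guarantee that $\E_\varepsilon(\rho_\infty^\varepsilon)$ and $U(\rho_\infty^\varepsilon)=\int\rho_\infty^\varepsilon\log\rho_\infty^\varepsilon$ are finite.

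Writing $K\rho(x):=\partial_x(-\partial_{xx})^{-s}\rho(x)$ and $U(\mu):=\int\mu\log\mu$, I would decompose
\[
\sqrt{\I_\varepsilon(\rho)}\,W_2(\rho,\rho_\infty^\varepsilon)-\tfrac{\lambda}{2}W_2^2(\rho,\rho_\infty^\varepsilon)-\E_\varepsilon(\rho)+\E_\varepsilon(\rho_\infty^\varepsilon)=T_1+T_2+T_3+T_4,
\]
by adding and subtracting exactly the same auxiliary quantities as in the proof of Theorem~\ref{HWI}. Here $T_1$, $T_2$, $T_3$ are the three pieces of that proof with $\rho_\infty$ replaced by $\rho_\infty^\varepsilon$ and with the extra summand $\varepsilon\partial_x\log\rho$ inserted inside the Cauchy-Schwarz factor of $T_1$, while $T_4$ collects the new entropy contribution
\[
T_4:=\varepsilon\int\partial_x\log\rho(x)\bigl(x-\theta(x)\bigr)\,d\rho(x)+\varepsilon\bigl(U(\rho_\infty^\varepsilon)-U(\rho)\bigr).
\]
Then $T_1\ge 0$ by Cauchy-Schwarz against $x-\theta(x)$ in $L^2(\rho)$, using $W_2^2(\rho,\rho_\infty^\varepsilon)=\int|x-\theta(x)|^2d\rho$; $T_2\equiv 0$ by the same quadratic algebraic identity as in Theorem~\ref{HWI}; and $T_3\ge 0$ by the convexity of $k_s(u)=c_{1,s}|u|^{2s-1}$ on $(0,\infty)$ combined with the monotonicity of $\theta$, exactly as in Theorem~\ref{HWI} (using Remark~\ref{remark1} for the principal-value representation when $s<1/2$, the fact that $c_{1,s}<0$ when $s>1/2$, and the separate logarithmic treatment of~\cite{ledoux} when $s=1/2$).

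For the nonnegativity of $T_4$ I would invoke McCann's displacement convexity of the Boltzmann entropy: along the interpolation $\rho_t:=((1-t)\mathrm{id}+t\theta)\#\rho$, the monotonicity of $\theta$ yields the Jacobian identity $\rho_t((1-t)x+t\theta(x))(1-t+t\theta'(x))=\rho(x)$, from which the explicit formula $U(\rho_t)=U(\rho)-\int\rho(x)\log(1-t+t\theta'(x))\,dx$ defines a convex function of $t\in[0,1]$, so
\[
U(\rho_\infty^\varepsilon)-U(\rho)\;\ge\;\tfrac{d}{dt}\big|_{t=0}U(\rho_t)\;=\;\int\partial_x\log\rho(x)\bigl(\theta(x)-x\bigr)\,d\rho(x)
\]
after one integration by parts in $x$. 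Multiplying by $\varepsilon$ gives $T_4\ge 0$ and closes the HWI inequality. The step I expect to be the main obstacle is the justification of this integration by parts together with the finiteness of all the integrals defining $T_1$--$T_4$ under the stated hypotheses on $\rho$; this will likely require an approximation by smooth, compactly supported, strictly positive densities, together with lower semicontinuity of $\E_\varepsilon$ and $\I_\varepsilon$ and continuity of $W_2$ along the approximation, in the spirit of the approximation scheme announced for Section~4.
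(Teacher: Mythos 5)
Your proposal is correct, but it handles the new entropy term by a genuinely different route than the paper. The paper does \emph{not} introduce a fourth term: it keeps the decomposition $T_1+T_2+T_3$, lumping the Boltzmann entropy together with the confinement into $T_2$, then splits off the Gaussian reference density $e^{-\pi x^2}$ and writes $T_2$ as $\varepsilon$ times an Otto--Villani-type expression for $H(\cdot\,|e^{-\pi x^2})$ (whose potential has convexity constant $2\pi$, so that term is nonnegative by the displacement-convexity inequality of \cite{ottovillani}, extended by the density argument of Theorem 9.17 in \cite{villani}) plus $\bigl(1-\tfrac{2\pi}{\lambda}\varepsilon\bigr)$ times a quadratic expression that collapses to $\lambda\int|\theta(x)-x|^2\,d\rho$; this is exactly where the hypothesis $\varepsilon<\lambda/(2\pi)$ is used, namely to make that residual coefficient nonnegative --- not, as you guessed, to guarantee existence or integrability of $\rho_\infty^\varepsilon$ (the entropy is bounded below by an arbitrarily small multiple of the second moment, so the minimizer exists for every $\varepsilon>0$). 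Your alternative --- keeping the confinement term as the identically vanishing $T_2$ of Theorem~\ref{HWI} and isolating the entropy in $T_4$, then using McCann's displacement convexity of $\int\rho\log\rho$ (equivalently the one-line estimate $U(\rho_\infty^\varepsilon)-U(\rho)=-\int\rho\log\theta'\,dx\ge-\int\rho(\theta'-1)\,dx$ followed by integration by parts) --- is valid and in fact slightly stronger, since it never uses $\varepsilon<\lambda/(2\pi)$ and so proves the inequality for all $\varepsilon>0$; the price is that you must justify yourself the a.e.\ Jacobian identity for the monotone map $\theta$, the differentiation at $t=0$, and the integration by parts (vanishing of $\rho(x)(\theta(x)-x)$ at infinity), which you correctly flag as requiring an approximation by smooth densities, whereas the paper outsources exactly these technicalities to \cite{ottovillani,villani}. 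Both arguments treat $T_1$ and $T_3$ identically, so your proof is acceptable as a variant of the paper's.
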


\begin{proof}
The proof is basically the same, but since we have a new term inside the respective diffusion, we shall include it for completeness.

As in the previous theorem, let $K\rho(x) = \partial_x(-\partial_{xx})^{-s}\rho(x)$ and $\theta$ be such that $\theta\#\rho=\rho_\infty^\varepsilon$. Then, we decompose the inequality as
\[\sqrt{\I_\varepsilon(\rho)}W_2(\rho,\rho_\infty^\varepsilon) - \frac{\lambda}{2} W_2^2(\rho,\rho_\infty^\varepsilon) -\E_\varepsilon(\rho)+\E_\varepsilon(\rho_\infty^\varepsilon)  = T_1 + T_2 + T_3\]
where
\begin{align*}
T_1 := &\left( \int \Big| K\rho(x)+ \lambda x +\varepsilon\partial_x\log\rho(x) \Big|^2 d\rho(x) \right)^{1/2}\left(\int|x-\theta(x)|^2d\rho(x)\right)^{1/2} \\ &-\int\Big(K\rho(x)+\lambda x+\varepsilon\partial_x\log\rho(x)\Big)\left(x-\theta(x)\right)d\rho(x)
\end{align*}
\begin{align*}
T_2 := -&\int \Big(\varepsilon\partial_x\log\rho(x) + \lambda x\Big)(\theta(x)-x)\;d\rho  - \int \Big( \frac{\lambda}{2}x^2 +\varepsilon\log\rho\Big)\;d\rho \\
    &+\int \Big( \frac{\lambda}{2}x^2 +\varepsilon\log\rho_\infty^\varepsilon\Big)\;d\rho_\infty^\varepsilon -\frac{\lambda}{2} \int |x-\theta(x)|^2d\rho(x)
\end{align*}
\[T_3 :=  \frac{c_{1,s}}{2}\int\frac{d\rho(x)d\rho(y)}{|\theta(x)-\theta(y)|^{1-2s}} - \frac{c_{1,s}}{2}\int\frac{d\rho(x)d\rho(y)}{|x-y|^{1-2s}} - \int K\rho(x)(\theta(x)-x)d\rho(x) \]

By the same arguments, we conclude that $T_1,T_3 \maig 0$. Now, for $T_2$, let us define the following functional
\[H(f|g):= \int f(x)\log\left(\frac{f(x)}{g(x)}\right)\;dx\]
for all $f,g\in L^1(\R)$. Then we can re-write $T_2$ in the following way
\begin{align*}
T_2 = &\varepsilon\left( -\int \partial_x\log\left(\frac{\rho(x)}{e^{-\pi x^2}}\right)(\theta(x)-x)\;d\rho(x) - H(\rho|e^{-\pi x^2})+ H(\rho_\infty^\varepsilon|e^{-\pi x^2})  + \pi\int |\theta(x)-x|^2\;d\rho  \right)\\
&+\left(1-\frac{2\pi}{\lambda}\varepsilon\right)\int\left\{ -\lambda x (\theta(x)-x)-\frac{\lambda}{2}x^2+\frac{\lambda}{2}\theta(x)^2+\frac{\lambda}{2}(\theta(x)-x)^2      \right\}\;d\rho(x).
\end{align*}

Note that the second line is equal to $(\lambda -2\pi\varepsilon)\int |\theta(x)-x|^2\;dx$, which is nonnegative for $\varepsilon<\lambda/2\pi$. For the first line, we can use the proof of the HWI inequality made in \cite{ottovillani}. Actually, Otto and Villani showed that whenever $f,f_0\in C_c^{\infty}(\R)\cap\mathcal{P}(\R)$ and $V\in C^2(\R)$ is such that $\int e^{-V}dx = 1$ and $V''\maig K$ for some $K\in\R$, then
\[H(f_0|e^{-V}) - H(f|e^{-V}) - \int \partial_x\log\frac{f(x)}{e^{-V(x)}}(\theta(x)-x)f(x)\;dx - \frac{K}{2}\int|\theta(x)-x|^2f(x)\;dx\;\maig 0,\]
and for the density argument given in the proof of the Theorem 9.17 of \cite{villani}, we have that this inequality holds for all $f,f_0 \in L^1(\R)\mathcal{P}_{2}(\R)$. So, applying this for $V(x)=\pi x^2$ we have that $K=2\pi$ and we conclude that $T_2\maig 0$.
\end{proof}

\begin{remark}
By the same arguments given for~\eqref{LogSobolev} and ~\eqref{Talagrand}, we conclude that the following Log-Sobolev and Talagrand inequalities hold for $\E_\varepsilon$, as long as $\rho$ satisfies the assumptions of proposition~\ref{HWI-e}:
\begin{equation}\label{LogSobolev-e}
\E_\varepsilon(\rho)-\E_\varepsilon(\rho^\varepsilon_\infty) \meni \frac{1}{2\lambda}\I_\varepsilon(\rho),
\end{equation}
\begin{equation*}
W_2(\rho,\rho^\varepsilon_\infty) \meni \sqrt{\frac{2}{\lambda}\Big( \E_\varepsilon(\rho)-\E_\varepsilon(\rho^\varepsilon_\infty) \Big). }
\end{equation*}
\end{remark}

\begin{remark}
 These results also work for a general confinement potential $V:\R\rightarrow \R$ instead of the quadratic one $\frac{\lambda}{2}x^2$, as long as $V-\frac{\lambda}{2}x^2$ is convex.
\end{remark}

Finally, let us prove the following lemma that shall be used in the last section for the convergence in entropy of the solutions of the approximate problems. The proof uses similar arguments given in the Theorem 1.4 of ~\cite{MR1485778}. Let us just remind that a sequence $\{\rho_n\}_{n\in \mathbb{N}}\subseteq \mathcal{P}(\R)$ is said to converge in the weak-$\ast$ sense to $\rho \in \mathcal{P}(\R)$, $\rho_n \stackrel{\ast}{\rightharpoonup}  \rho$ if
\[\lim_{n\to \infty}\int_\R \varphi(x) d\rho_n(x) =  \int_\R \varphi(x) d\rho(x)\;,\;\; \mbox{ for all } \varphi \in C_0(\R)\]
where $C_0(\R)$ is the space of continuous functions on $\R$ that goes to zero at infinity. It is clear that convergence in $W_2$ implies weak convergence and weak convergence implies weak-$\ast$ convergence.

\begin{lemma}
The entropy $\E_\varepsilon$ is weak-$\ast$ lower semi-continuous for all $\varepsilon\maig 0$.
\end{lemma}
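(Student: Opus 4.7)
The plan is to decompose $\E_\varepsilon$ into its three natural pieces — the Riesz interaction
$\E_W(\rho) = \tfrac{1}{2}\iint W(x-y)\,d\rho(x)d\rho(y)$ with $W(z) = c_{1,s}|z|^{2s-1}$ (understood as $-c_{1,1/2}\log|z|$ when $s=1/2$), the confinement $\E_V(\rho) = \tfrac{\lambda}{2}\int |x|^2\,d\rho$, and the Boltzmann entropy $H(\rho) = \int \rho\log\rho\,dx$ (set to $+\infty$ on singular measures) — and prove weak-$\ast$ lower semi-continuity of each piece separately along any sequence $\rho_n \stackrel{\ast}{\rightharpoonup} \rho$ in $\mathcal{P}(\R)$, thereby obtaining the claim for all $\varepsilon \geq 0$.

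For $\E_V$, I would approximate $V(x) = \tfrac{\lambda}{2}|x|^2$ from below by a monotone sequence of compactly supported continuous functions $V_R = V\,\chi_R$ with $\chi_R \in C_c(\R)$ and $\chi_R \nearrow 1$. Since $V_R \in C_0(\R)$, weak-$\ast$ convergence gives $\int V_R\,d\rho_n \to \int V_R\,d\rho$, and since $V_R \leq V$ one obtains $\int V_R\,d\rho \leq \liminf_n \int V\,d\rho_n$. Sending $R \to \infty$ by monotone convergence yields l.s.c.\ of $\E_V$. For $H$, I would invoke the classical Legendre-type representation
\begin{equation*}
H(\rho) = \sup_{\varphi \in C_c(\R)} \Bigl\{\int_\R \varphi\,d\rho - \int_\R (e^\varphi - 1)\,dx\Bigr\},
\end{equation*}
which is precisely the duality formula underlying Theorem 1.4 of~\cite{MR1485778}. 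Each affine functional inside the supremum is weak-$\ast$ continuous on $\mathcal{P}(\R)$, hence their supremum is weak-$\ast$ lower semi-continuous.

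For $\E_W$, first observe that $\rho_n \otimes \rho_n \stackrel{\ast}{\rightharpoonup} \rho \otimes \rho$ on $\R^2$ whenever $\rho_n \stackrel{\ast}{\rightharpoonup} \rho$ on $\R$ (passing if necessary to a subsequence, assuming the limit remains a probability measure). When $s \in (0,1/2)$ the kernel $W \geq 0$ is lower semi-continuous and vanishes at infinity, so approximating from below by $W_{N,R}(z) = (W\wedge N)\,\chi_R(z) \in C_c(\R)$, applying weak-$\ast$ convergence of the product measures, and passing to the limit in $N,R$ by monotone convergence does the job. When $s \in [1/2,1)$, I would split $W = W^+ - W^-$ into a positive l.s.c.\ piece (singular at the origin) and a continuous piece with at most logarithmic or power growth at infinity: handle $W^+$ as before, and for $W^-$ use the tightness provided by the uniform bound on $\int |x|^2\,d\rho_n$ that is automatic as soon as $\liminf_n \E_\varepsilon(\rho_n) < \infty$, together with the explicit growth condition on $\rho$ that makes $\E$ finite in this range.

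The main obstacle is precisely this $s \in [1/2,1)$ regime: $W$ is not a non-negative l.s.c.\ kernel, so weak-$\ast$ l.s.c.\ cannot be read off from monotone truncation alone and has to be combined with coercivity of the quadratic confinement in $\E_\varepsilon$ to prevent mass from escaping to infinity. For the opposite direction one only needs the case $\liminf_n \E_\varepsilon(\rho_n) < \infty$, which secures the required second-moment tightness and validates the truncation argument.
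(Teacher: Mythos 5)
Your handling of the confinement and interaction pieces in the range $s\in(0,1/2)$ is correct and is essentially the paper's argument: the paper merges the two into the single non-negative lower semi-continuous kernel $F(x,y)=\frac{\lambda}{4}(x^2+y^2)+\frac{c_{1,s}}{2}|x-y|^{2s-1}$ (set to $+\infty$ on the diagonal), approximates it from below by functions in $C_0(\R^2)$, and uses $\rho_n\otimes\rho_n\stackrel{\ast}{\rightharpoonup}\rho\otimes\rho$ together with monotone convergence; your separate truncations of $W$ and $V$ amount to the same device (and, incidentally, the product convergence holds for any weak-$\ast$ convergent sequence, with no need to pass to a subsequence or to assume the limit is a probability measure). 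The paper disposes of the Boltzmann term by citing McCann, and its non-negativity of $F$ restricts its written proof to $s\meni 1/2$, so your attempt to also cover $s\in[1/2,1)$ goes beyond the paper; that sketch is plausible but incomplete, since for $s>1/2$ the kernel $c_{1,s}|x-y|^{2s-1}$ is nonpositive and its term is only lower semi-continuous after it is absorbed into the confinement or along sequences with uniformly bounded second moments and no mass loss.

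The genuine gap is in the entropy piece. The formula $H(\rho)=\sup_{\varphi\in C_c(\R)}\bigl\{\int\varphi\,d\rho-\int(e^\varphi-1)\,dx\bigr\}$ is false when the reference measure is Lebesgue on the whole line: taking $\varphi=-\chi_R$ with $\chi_R\in C_c(\R)$, $0\meni\chi_R\meni 1$, $\chi_R=1$ on $[-R,R]$, the bracket is at least $2R(1-e^{-1})-1$, so the right-hand side equals $+\infty$ for every $\rho$ (the variational formula with $\int(e^\varphi-1)\,dm$ requires a finite reference measure $m$). Worse, the statement you want it to prove is itself false: $\rho\mapsto\int\rho\log\rho$ is not weak-$\ast$ (nor narrowly) lower semi-continuous on $\mathcal{P}(\R)$. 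Indeed, with $\rho_n=(1-\tfrac1n)\gamma+\tfrac1n u_n$, where $\gamma$ is a Gaussian and $u_n$ is uniform on $[-e^{n^2},e^{n^2}]$, convexity of $t\log t$ gives $\int\rho_n\log\rho_n\meni(1-\tfrac1n)\int\gamma\log\gamma-\tfrac1n\log(2e^{n^2})\to-\infty$, while $\rho_n\stackrel{\ast}{\rightharpoonup}\gamma$. Hence a strictly term-by-term splitting cannot work for $\varepsilon>0$: the entropy must borrow coercivity from the confinement, e.g. writing $\varepsilon\int\rho\log\rho+\tfrac{\lambda}{2}\int x^2\rho=\varepsilon\,\mathcal{H}(\rho|e^{-\pi x^2})+(\tfrac{\lambda}{2}-\varepsilon\pi)\int x^2\rho$, which is exactly where the paper's standing assumption $\varepsilon<\lambda/2\pi$ enters; relative entropy with respect to the Gaussian probability measure is non-negative and lower semi-continuous, and the leftover quadratic term is handled by your truncation argument. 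Alternatively, state and use the lemma only along sequences with uniformly bounded second moments (which is how it is applied in Section 4, where the approximations converge in $W_2$), and make that restriction explicit.
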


\begin{proof}
We know from~\cite{MR1451422} that the functional
\[\rho \mapsto \int \rho \log \rho\]
is weak-$\ast$ lower semi-continuous, so we just need to show the result for $\E$. For this, let us write it in the following way:
\[\E(\rho) = \int_{\R^2} F(x,y) d\rho(x)d\rho(y),\]
where
\[F(x,y) = \displaystyle\left\{ \begin{array}{cc}\displaystyle  \frac{\lambda}{4}(x^2+y^4) + \frac{c_{1,s}}{2}\frac{1}{|x-y|^{1-2s}}\;, & \mbox{ if } x\neq y \\ +\infty\hspace{3.9cm}, & \mbox{ if } x= y \end{array}\right..\]

Since $F$ is non-negative and smooth outside the diagonal $x=y$, we can find a sequence $\{F_k\}_{k \in \mathbb{N}}\subset C_0(\R^2)$ such that $F_k(x,y)\nearrow F(x,y)$ for all $(x,y)\in \R^2$. Therefore, by the monotone convergence theorem and the fact that $\rho_n\times\rho_n \stackrel{\ast}{\rightharpoonup} \rho\times\rho $ if $\rho_n \stackrel{\ast}{\rightharpoonup} \rho$, we have that
\begin{align*}
\E(\rho) &= \int F(x,y)\;d\rho(x)d\rho(y) = \lim_{k\to\infty}\int F_k(x,y)\;d\rho(x)d\rho(y) \\
&= \lim_{k\to\infty} \lim_{n\to\infty}\int F_k(x,y)\;d\rho_n(x)d\rho_n(y) \meni  \lim_{n\to\infty}\int F(x,y)\;d\rho_n(x)d\rho_n(y) \\
&= \liminf_{n\to\infty} \E(\rho_n)\,.
\end{align*}
\end{proof}

\section{Exponential Convergence}

In Section~\ref{BE}, most of the calculations are performed at a formal level, assuming some strong regularity on the solutions of~\eqref{eq:fracPMESM} that has not been proved at the moment (see \cite{vazquez3} for the proof of H\"older regularity). In this section, to avoid this regularity issues, we shall prove that the energy of the solution decays exponentially fast for the regularized equation with mollified initial data, and then passing the limit on these regularizing parameters.

\begin{theorem}\label{Theorem}
Let $\rho_0 \in L^1(\R)\cap L^{\infty}(\R)$ such that
\[0\meni \rho_0(x) \meni Ae^{-a|x|}\;,\;\;\]
for some constants $a,A > 0$.
Then, for each $0<s<1/2$, the solution $\rho(t,\cdot)$ of ~\eqref{eq:fracPMESM} with initial data $\rho_0$ satisfies
\[\E(\rho(t))-\E(\rho_\infty) \meni e^{-2\lambda t}\Big(\E(\rho_0)-\E(\rho_\infty) \Big).\]
\end{theorem}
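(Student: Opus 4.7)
\emph{Strategy.} I would make the formal Bakry--\'Emery argument of Section~\ref{BE} rigorous by a two-level regularization combined with the Log--Sobolev inequality~\eqref{LogSobolev-e} proved for $\E_\varepsilon$. First, mollify the initial datum by setting $\rho_0^\varepsilon=\eta_\varepsilon*\rho_0$ with a standard mollifier, so that $\rho_0^\varepsilon\in C^\infty\cap L^1\cap L^\infty$ still satisfies an exponential upper bound (with adjusted constants). Second, replace \eqref{eq:fracPMESM} by the regularized $W_2$-gradient flow of $\E_\varepsilon$,
\[
\rho^\varepsilon_t=\partial_x\!\Big(\rho^\varepsilon\,\partial_x\!\big[(-\partial_{xx})^{-s}\rho^\varepsilon+\tfrac{\lambda}{2}x^2+\varepsilon\log\rho^\varepsilon\big]\Big),\qquad \rho^\varepsilon(0,\cdot)=\rho_0^\varepsilon,
\]
whose dissipation is exactly $\I_\varepsilon$. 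The extra $\varepsilon\partial_{xx}\rho^\varepsilon$ provides a classical linear diffusion, so via the approximation scheme of \cite{vazquez2} the solution $\rho^\varepsilon(t,\cdot)$ is smooth and strictly positive (in particular $C^\alpha$ for any $\alpha>1-2s$), the identity $\frac{d}{dt}\E_\varepsilon(\rho^\varepsilon(t))=-\I_\varepsilon(\rho^\varepsilon(t))$ holds pointwise in $t$, and Proposition~\ref{HWI-e} is applicable provided $0<\varepsilon<\lambda/(2\pi)$.

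Combining this dissipation identity with \eqref{LogSobolev-e} yields the differential inequality
\[
\frac{d}{dt}\bigl(\E_\varepsilon(\rho^\varepsilon(t))-\E_\varepsilon(\rho_\infty^\varepsilon)\bigr)\leq -2\lambda\bigl(\E_\varepsilon(\rho^\varepsilon(t))-\E_\varepsilon(\rho_\infty^\varepsilon)\bigr),
\]
and Gr\"onwall's lemma gives $\E_\varepsilon(\rho^\varepsilon(t))-\E_\varepsilon(\rho_\infty^\varepsilon)\leq e^{-2\lambda t}\bigl(\E_\varepsilon(\rho_0^\varepsilon)-\E_\varepsilon(\rho_\infty^\varepsilon)\bigr)$ at each admissible $\varepsilon$. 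The remaining task is to pass to the limit $\varepsilon\to 0^+$ in this inequality. On the right-hand side, the contribution $\varepsilon\int\rho_0^\varepsilon\log\rho_0^\varepsilon\,dx$ vanishes (uniformly bounded because $\rho_0^\varepsilon\in L^1\cap L^\infty$ inherits the exponential tails of $\rho_0$), while the fractional interaction and quadratic-moment terms converge by dominated convergence, so $\E_\varepsilon(\rho_0^\varepsilon)\to\E(\rho_0)$; a parallel variational argument for the strictly convex functionals $\E_\varepsilon$ yields $\rho_\infty^\varepsilon\to\rho_\infty$ and $\E_\varepsilon(\rho_\infty^\varepsilon)\to\E(\rho_\infty)$. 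On the left-hand side, a uniform-in-$\varepsilon$ second-moment bound together with mass conservation furnishes weak-$*$ compactness of $\rho^\varepsilon(t,\cdot)$; uniqueness of solutions to~\eqref{eq:fracPMESM} (from \cite{vazquez2}) identifies any weak-$*$ limit with $\rho(t,\cdot)$, and the weak-$*$ lower semicontinuity of $\E$ proved in the last lemma of Section~\ref{TranspIneq} yields $\E(\rho(t))\leq\liminf_{\varepsilon\to 0}\E_\varepsilon(\rho^\varepsilon(t))$.

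The main obstacle is this final $\varepsilon\to 0$ step on the dynamical side: one needs enough stability of the regularized gradient flow to pass the weak-$*$ limit through the nonlinear nonlocal operator and identify it as the unique solution of~\eqref{eq:fracPMESM}. This calls for a Lions--Aubin-type compactness in time combined with uniform-in-$\varepsilon$ control of $\|\rho^\varepsilon(t)\|_{L^\infty}$ and of the second moment on compact time intervals, whose availability relies crucially on the exponential decay hypothesis on $\rho_0$; the same hypothesis is what makes the Boltzmann entropy correction disappear at $t=0$. Once the limit is identified, everything else reduces to lower semicontinuity and dominated convergence.
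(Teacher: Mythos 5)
Your overall strategy is the one the paper uses: regularize \eqref{eq:fracPMESM} by adding $\varepsilon\partial_{xx}\rho^\varepsilon$ (the $W_2$-gradient flow of $\E_\varepsilon$), invoke the smoothness and uniform exponential tail bounds of $\rho^\varepsilon$ from \cite{vazquez2}, combine $\frac{d}{dt}\E_\varepsilon(\rho^\varepsilon)=-\I_\varepsilon(\rho^\varepsilon)$ with the Log--Sobolev inequality \eqref{LogSobolev-e} and Gr\"onwall, and then pass $\varepsilon\to0^+$ using (a) $\E_\varepsilon(\rho_0)\to\E(\rho_0)$, (b) convergence of the minimal values $\E_\varepsilon(\rho_\infty^\varepsilon)\to\E(\rho_\infty)$ via the two-sided minimality argument and a uniform second-moment bound on $\rho_\infty^\varepsilon$, and (c) lower semicontinuity of $\E$ along the limit of $\rho^\varepsilon(t)$; finally mollify a general initial datum. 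The paper separates the two regularizations (smooth data first, then a mollification parameter $\delta$ at the end), but tying them together as you do is only a cosmetic difference. Note also that you do not need $\rho_\infty^\varepsilon\to\rho_\infty$; only the energy values must converge, which is exactly what the minimality sandwich gives.

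The one genuine problem is the step you yourself flag as the main obstacle: identifying the weak-$\ast$ limit of $\rho^{\varepsilon}(t)$ with $\rho(t)$. You propose to do this by Aubin--Lions compactness plus ``uniqueness of solutions to \eqref{eq:fracPMESM} (from \cite{vazquez2})'', but \cite{vazquez2} proves existence of weak solutions by vanishing viscosity, not uniqueness; no uniqueness theorem for this nonlocal equation is available to you in the framework of the paper, so identifying an abstract limit as \emph{the} solution cannot be done this way. The paper avoids the issue entirely: the solution $\rho$ in the statement is precisely the one constructed in \cite{vazquez2} as the limit of the viscous approximations $\rho^\varepsilon$, so no separate identification argument (and no passage of the limit through the nonlinear nonlocal operator) is needed; all that is required is tightness from the uniform bound $\rho^\varepsilon(t,x)\le C(t)e^{-a(t)|x|}$, the uniform second moments to upgrade weak convergence to $W_2$ convergence, and the lower semicontinuity lemma of Section~\ref{TranspIneq} applied via $\E(\rho^{\varepsilon_n}(t))\le\E_{\varepsilon_n}(\rho^{\varepsilon_n}(t))+\pi\varepsilon_n\int x^2\rho^{\varepsilon_n}(t)\,dx$. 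If you rephrase your limiting step so that the solution is by definition the vanishing-viscosity limit of the regularized flows (or supply an independent uniqueness result, which the cited literature does not give), the rest of your argument matches the paper's proof.
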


\begin{proof}
In order to use the results of Section \ref{TranspIneq}, firstly we shall assume that
\begin{equation}\label{InitialRegularity}
\rho_0 \in C^\infty(\R) \;\; \mbox{ and } \;\; \int_\R \rho_0(x) \;dx =1.
\end{equation}
Let $\rho_\infty,\rho^\varepsilon_\infty \in \mathcal{P}(\R)$ be the minimizers for $\E$ and $\E_\varepsilon$ respectively. By the assumption on $\rho_0$ we know from the proofs of Theorems 4.1 and 4.2
in~\cite{vazquez2} that the solutions $\rho$ and $\rho^\varepsilon$
to
\begin{equation} \label{FPME}
\left\{ \begin{array}{cc}
             \partial_t\rho = \partial_x(\rho \partial_x (-\partial_{xx})^{-s}\rho + \lambda x\rho)  &,  \mbox{ in } \R\times(0, \infty) \\
             \rho(0) = \rho_0 \hspace{3.6cm} &  , \mbox{ in } \R, \hspace{1.3cm}
           \end{array}
 \right.
\end{equation}
and
\begin{equation} \label{FPMEd}
\left\{ \begin{array}{ccc}
             \partial_t\rho^\varepsilon = \partial_x(\rho^\varepsilon \partial_x (-\partial_{xx})^{-s}\rho^\varepsilon +\lambda x \rho^\varepsilon ) + \varepsilon\partial_{xx}\rho^\varepsilon  &, \mbox{ in } \R\times(0, \infty) \\
             \rho^\varepsilon(0) = \rho_0 \hspace{5.5cm} & , \mbox{ in } \R \hspace{1.3cm}
           \end{array}
 \right.
\end{equation}
satisfy $\rho \in C([0,\infty);L^1(\R))$ and $\rho^\varepsilon \in C^1((0,\infty)\times \R)$
for all $\varepsilon >0$ sufficiently small.
Because of the regularization in~\eqref{FPMEd}, for fixed time $t>0$, $\rho^\varepsilon(t,.)$ is in fact in $C^2(\R)$.
Moreover, there exist  $C(t),a(t) > 0$, such that
\begin{equation}\label{ExpBound}
0 \meni \; \rho(t,x)\;,\;\rho^\varepsilon(t,x) \;\meni\; C(t)e^{-a(t)|x|}.
\end{equation}

Since $\rho^\varepsilon(t)$ is smooth, we can apply the Log-Sobolev Inequality~\eqref{LogSobolev-e} for $\E_\varepsilon$ and obtain that for all $t\maig 0$,
\[\E_\varepsilon(\rho^\varepsilon(t)) - \E_\varepsilon(\rho^\varepsilon_\infty) \meni \frac{1}{2\lambda}\I_\varepsilon(\rho^\varepsilon(t)).\]
Making use of the fact that
\[\frac{d}{dt}\E_\varepsilon(\rho^\varepsilon(t)) = -\I_\varepsilon(\rho^\varepsilon(t)),\]
we conclude that
\begin{equation}\label{eq:decay-e}
\E_\varepsilon(\rho^\varepsilon(t)) - \E_\varepsilon(\rho^\varepsilon_\infty) \meni e^{-2\lambda t}\Big(\E_\varepsilon(\rho_0) - \E_\varepsilon(\rho^\varepsilon_\infty) \Big).
\end{equation}

To take the limits as $\varepsilon \rightarrow 0^+$,  let us analyze each term
on both sides of~\eqref{eq:decay-e} separately:
\begin{itemize}
\item[i)] The easiest one is the limit $\E_\varepsilon(\rho_0)$, since $\displaystyle\lim_{\varepsilon \to 0^+}\E_\varepsilon(\rho_0)=\E(\rho_0)$ holds as long as $\E_\varepsilon(\rho_0)<\infty$ for some $\varepsilon
> 0$, which is true by the assumptions on $\rho_0$.

\item[ii)] For the term $\E_\varepsilon(\rho^\varepsilon_\infty)$, let us first define the following
auxiliary functional on $\mathcal{P}_{2,ac}(\R)$:
\[ \mathcal{H}(\rho) := \mathcal{H}(\rho|e^{-\pi x^2}) = \pi\int x^2 \rho +\int \rho \log\rho.\]
Since $\int e^{-\pi x^2}dx = 1$, we can write
\[\mathcal{H}(\rho) = \int \frac{\rho}{e^{-\pi x^2}}\log\left(\frac{\rho}{e^{-\pi x^2}}\right)e^{-\pi x^2}\;dx = \int \left[ \frac{\rho}{e^{-\pi x^2}}\log\left(\frac{\rho}{e^{-\pi x^2}}\right) - \frac{\rho}{e^{-\pi x^2}} +1  \right] e^{-\pi x^2}\;dx,\]
which is nonnegative by Jensen's inequality.

Let us prove that $\limsup_{\varepsilon \to 0}\E_\varepsilon(\rho_\infty^\varepsilon)\meni \E(\rho_\infty)$. Using the fact that $\rho^\varepsilon_\infty$ is the minimum for $\E_\varepsilon$, we obtain the following inequality
\begin{equation}\label{MinEnergy3}
\E_\varepsilon(\rho_\infty^\varepsilon) \meni \E_\varepsilon(\rho_\infty) = \E(\rho_\infty) + \varepsilon\int \rho_\infty \log\rho_\infty.
\end{equation}
By the characterization of the minimum $\rho_\infty$ in \cite{vazquez1,chafai}, we know that $\rho_\infty \in \mathcal{P}_{ac}^2(\R)\cap L^\infty(\R)$, and hence the second term on the right hand side of~\eqref{MinEnergy3} is finite. Thus, we can take the limit $\varepsilon \rightarrow 0$ and obtain that $\limsup_{\varepsilon \to 0^+} \E_\varepsilon(\rho_\infty^\varepsilon) \meni  \E(\rho_\infty)$.

For the opposite inequality $\liminf_{\varepsilon\to 0^+}
\E_\varepsilon(\rho_\infty^\varepsilon) \maig \E(\rho_\infty)  $, we can use the fact that $\rho_\infty$ is the minimum for $\E$ and write
\begin{align}
\E(\rho_\infty) &\meni \E(\rho_\infty^\varepsilon) = \E_\varepsilon(\rho_\infty^\varepsilon) - \varepsilon \mathcal{H}(\rho_\infty^\varepsilon) + \varepsilon \pi \int x^2\rho_\infty^\varepsilon \label{MinEnergy1}\\
&\meni \E_\varepsilon(\rho_\infty^\varepsilon) + \varepsilon \pi \int x^2\rho_\infty^\varepsilon. \label{MinEnergy2}
\end{align}

So, it is sufficient to prove that the second moments of $\rho_\infty^\varepsilon$ are uniformly bounded for $\varepsilon>0$ sufficiently small. For this, note that
\begin{align*}
0 &\meni \;\; \frac{\lambda}{4}\int x^2 \rho_\infty^\varepsilon\;\; \meni \;\;\frac{(1-\varepsilon\pi)\lambda}{2}\int x^2 \rho_\infty^\varepsilon \\
&\meni \; \frac{(1-\varepsilon\pi)\lambda}{2}\int x^2 \rho_\infty^\varepsilon + \frac{c_{1,s}}{2}\int \frac{d\rho_\infty^\varepsilon(x)d\rho_\infty^\varepsilon(y)}{|x-y|^{1-2s}} +\frac{\varepsilon\lambda}{2} \mathcal{H}(\rho_\infty^\varepsilon)\\
&= \;\; \E_\varepsilon(\rho_\infty^\varepsilon) \;\;\meni\;\; \E_\varepsilon(\rho_\infty)\;\; \meni \;\;\E(\rho_\infty) + \left|\int \rho_\infty\log\rho_\infty \right|
\end{align*}
for all $0<\varepsilon<1/2\pi$. Therefore, by~\eqref{MinEnergy1} and~\eqref{MinEnergy2}
\[ \E(\rho_\infty) \meni \; \liminf_{\varepsilon \to 0^+}\E_\varepsilon(\rho_\infty^\varepsilon) +  \lim_{\varepsilon \to 0^+}\varepsilon \pi \int x^2\rho_\infty^\varepsilon  = \;\liminf_{\varepsilon \to 0^+}\E_\varepsilon(\rho_\infty^\varepsilon).\]

Hence, as $\varepsilon$ goes to zero from above, we have that the minimum of $\E_\varepsilon(\rho)$ indeed converge to the minimum of $\E(\rho)$, i.e., $\E(\rho_\infty)=\displaystyle\lim_{\varepsilon \to 0^+}\E_\varepsilon(\rho^\varepsilon_\infty)$.

\item[iii)] Finally, let us prove that $\E(\rho(t))\meni \liminf_{\varepsilon\to 0^+}\E_\varepsilon(\rho^\varepsilon(t))$, as a consequence of the convergence of $\rho^\varepsilon(t)$
to $\rho(t)$ in $\mathcal{P}_{2,ac}(\R)$ and the lower semi-continuity of the energy $\E_\varepsilon$.
For this we can use the bound~\eqref{ExpBound} to obtain
\[\lim_{R\to \infty}\;\sup_{\varepsilon>0}\;\int_{|x|> R}\rho^\varepsilon(t,x)dx \meni \lim_{R\to \infty}\;C(t)\int_{|x|> R}e^{-a(t)|x|}dx = 0,\]
which means that $\rho^\varepsilon(t)$ is a tight family of probability measures and by Prokhorov Theorem, there exist a sequence $\varepsilon_n\rightarrow 0^+$ such that $\rho^{\varepsilon_n}(t)\rightharpoonup\rho(t)$, i.e.,
\begin{equation}\label{weakconvergence}
\int_\R \varphi(x)\rho^{\varepsilon_n}(t,x)\;dx \rightarrow \int_\R \varphi(x)\rho(t,x)\;dx\;\;,\;\;\; \forall \varphi \in C_b(\R)
\end{equation}
Moreover, due to uniform exponential bound, we also have that
\begin{equation}\label{2ndmoment}
\lim_{R\to \infty}\;\sup_{\varepsilon_n\to 0}\;\int_{|x|\maig R}x^2\rho^{\varepsilon_n}(t,x)dx \meni \lim_{R\to \infty}\;C(t)\int_{|x|> R}x^2e^{-a(t)|x|}dx = 0.
\end{equation}
Therefore, by  item (iii) of~\eqref{limsup} we have that~\eqref{weakconvergence} and~\eqref{2ndmoment} imply that $\rho^{\varepsilon_n}(t)$ converges to $\rho(t)$ in $(\mathcal{P}_2(\R),W_2)$. Now, for the following inequality
\[\E(\rho^{\varepsilon_n}(t)) = \E_{\varepsilon_n}(\rho^{\varepsilon_n}(t)) - \varepsilon_n \mathcal{H}(\rho^{\varepsilon_n}(t)) +\pi\varepsilon_n\int x^2 \rho^{\varepsilon_n}(t,x) \meni \E_{\varepsilon_n}(\rho^{\varepsilon_n}) +\pi\varepsilon_n\int x^2 \rho^{\varepsilon_n}(t,x),     \]
and by the fact that $\E$ is lower semi-continuous in $(\mathcal{P}_2(\R),W_2)$ and the second moments of $\rho^{\varepsilon_n}(t)$ are uniformly bounded w.r.t $n$, we obtain
\begin{equation*}
\E(\rho(t)) \meni \liminf_{n\to\infty} \E(\rho^{\varepsilon_n}(t)) \meni \liminf_{n\to\infty} \E_{\varepsilon_n}(\rho^{\varepsilon_n}(t)).
\end{equation*}
\end{itemize}
Putting all the limits as $\varepsilon$ goes to zero together, we can conclude
the exponential convergence of $\E(\rho(t))-\E(\rho_\infty)$, that is,
\begin{align*}
\E(\rho(t))-\E(\rho_\infty) &\meni  \liminf_{n\to\infty} \E_{\varepsilon_n}(\rho^{\varepsilon_n}(t)) - \lim_{n\to\infty} \E_{\varepsilon_n}(\rho^{\varepsilon_n}_\infty) \\
&= \liminf_{n\to\infty} \Big( \E_{\varepsilon_n}(\rho^{\varepsilon_n}(t))-\E_{\varepsilon_n}(\rho^{\varepsilon_n}_\infty)\Big) \\
&\meni e^{-2\lambda t} \liminf_{n\to\infty} \Big( \E_{\varepsilon_n}(\rho_0)-\E_{\varepsilon_n}(\rho^\varepsilon_\infty)\Big) \\
&= e^{-2\lambda t}\Big( \E(\rho_0)-\E(\rho_\infty)\Big).
\end{align*}

If the regularity assumption in~\eqref{InitialRegularity} is not true, we can proceed the above argument with the mollified initial data $\rho_{0,\delta} = \eta_\delta\ast\rho_0$, which has the same bound and mass as $\rho_0$. Since we still have the same exponential bounds for the respective solutions $\rho_\delta(t)$, we can argue as above and conclude that $\E(\rho(t))\meni\liminf_{\delta\to 0}\E(\rho_\delta(t))$ holds for all $t>0$. For $t=0$ we can use the exponential bound of the initial data and the Dominated Convergence Theorem to conclude that $\lim_{\delta\to 0}\E(\rho_{\delta,0})=\E(\rho_0)$.
\end{proof}

As a direct consequence of the Talagrand inequality in \eqref{Talagrand}, we also obtain the exponential decay in Wasserstein distance.
\begin{corollary}
Assume that $\rho_0$ satisfies $0\meni \rho_0(x) \meni Ae^{-a|x|}$ for all $x \in \R$ and some $a,A \maig 0$. Then, for each $0<s <1/2$, the solution of ~\eqref{eq:fracPMESM} with initial data $\rho_0$ satisfies
\begin{equation*}
W_2(\rho(t),\rho_\infty) \meni e^{-\lambda t} \sqrt{\frac{2}{\lambda}\Big( \E(\rho_0)-\E(\rho_\infty) \Big) }.
\end{equation*}
\end{corollary}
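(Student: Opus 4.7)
The corollary is essentially a one-line deduction, so my plan is to arrange the two ingredients cleanly and then address the only subtle point, namely applying the Talagrand inequality to the (not a priori smooth) solution $\rho(t,\cdot)$.

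First, I would recall the Talagrand inequality \eqref{Talagrand}, which gives, for any sufficiently regular nonnegative $\rho \in L^1(\R)\cap L^\infty(\R)\cap C^\alpha(\R)$ with $\alpha>\max(1-2s,0)$ and $\int \rho = 1$,
\[
W_2(\rho,\rho_\infty) \meni \sqrt{\frac{2}{\lambda}\bigl(\E(\rho)-\E(\rho_\infty)\bigr)}\,.
\]
Next, I would invoke Theorem~\ref{Theorem}, which supplies the exponential decay
\[
\E(\rho(t))-\E(\rho_\infty) \meni e^{-2\lambda t}\bigl(\E(\rho_0)-\E(\rho_\infty)\bigr).
\]
Substituting the latter estimate into the former at $\rho=\rho(t,\cdot)$ and extracting $e^{-\lambda t}$ from the square root yields the desired bound.

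The only nontrivial step is justifying that the Talagrand inequality is available for the solution $\rho(t,\cdot)$ itself, since it was established in the previous section only for smooth H\"older continuous densities. To handle this, I would reuse exactly the approximation scheme of Theorem~\ref{Theorem}: apply Talagrand to the regularized solutions $\rho^\varepsilon(t,\cdot)$, which enjoy the required smoothness by~\cite{vazquez2} and satisfy the $\varepsilon$-version of the inequality from Remark after Proposition~\ref{HWI-e}, giving
\[
W_2(\rho^\varepsilon(t),\rho^\varepsilon_\infty) \meni \sqrt{\frac{2}{\lambda}\bigl(\E_\varepsilon(\rho^\varepsilon(t))-\E_\varepsilon(\rho^\varepsilon_\infty)\bigr)}\,.
\]
Then pass to the limit $\varepsilon\to 0^+$ along the subsequence constructed in the proof of Theorem~\ref{Theorem}: the left-hand side is lower semicontinuous under the $W_2$-convergence $\rho^{\varepsilon_n}(t)\to\rho(t)$ (together with $\rho^{\varepsilon_n}_\infty\to\rho_\infty$ in $W_2$, which follows from the uniform support/exponential-tail bounds on the minimizers already exploited in part~(ii) of that proof), while the right-hand side converges to $\sqrt{(2/\lambda)(\E(\rho(t))-\E(\rho_\infty))}$ by the limits~(ii) and~(iii) established there.

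Combining, one obtains
\[
W_2(\rho(t),\rho_\infty) \meni \sqrt{\frac{2}{\lambda}\bigl(\E(\rho(t))-\E(\rho_\infty)\bigr)}
\meni e^{-\lambda t}\sqrt{\frac{2}{\lambda}\bigl(\E(\rho_0)-\E(\rho_\infty)\bigr)},
\]
which is the claim. Thus there is no genuine new obstacle beyond repackaging the approximation argument already carried out for the energy decay; the main technical point to verify carefully is the $W_2$-lower semicontinuity used when passing to the limit in the Talagrand inequality, and the $W_2$-convergence of $\rho^{\varepsilon_n}_\infty$ to $\rho_\infty$, both of which follow from the uniform moment bounds established in the proof of Theorem~\ref{Theorem}.
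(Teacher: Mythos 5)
Your argument is correct and is essentially the paper's own: the corollary is obtained by applying the Talagrand inequality \eqref{Talagrand} to $\rho(t)$ and inserting the exponential entropy decay of Theorem~\ref{Theorem}. The only divergence is your regularization detour, which is unnecessary — by Remark 2) following Theorem~\ref{HWI}, \eqref{Talagrand} holds for a general $\rho\in\mathcal{P}_2(\R)$ with finite energy because the exchanged HWI only needs the regularity of $\rho_\infty\in L^\infty(\R)\cap C^{1-s}(\R)$ (the transport map is taken from $\rho_\infty$), so it applies directly to $\rho(t)$; moreover, the sub-claim in your detour that $\rho^{\varepsilon_n}_\infty\to\rho_\infty$ in $W_2$ is not established in the paper (only uniform second moments of $\rho^\varepsilon_\infty$ and convergence of the minimal energies are proved there), so if you did insist on that route you would still need a compactness-and-identification argument for the minimizers.
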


For the Fokker-Planck equation or the classic Porous Medium Equations, exponential convergence of the relative entropy $\mathcal{E}(\rho) - \E(\rho_\infty)$ implies convergence of $\rho$ to the
steady states $\rho_\infty$ in some classical $L^p$ norms. Here we can show that the convergence in the
relative entropy implies the convergence of the norm $\| (-\Delta)^{-\frac{s}{2}}(\rho-\rho_\infty)\|_2$.

\begin{lemma}\label{lemmaE} Let $\rho_\infty$ be the unique minimizer of $\mathcal{E}$, then for any
$\rho \in \mathcal{P}_2(\mathbb{R}^d)$,
\begin{equation*}
\frac{1}{2}\|(-\Delta)^{-\frac{s}{2}}(\rho-\rho_\infty)\|_2^2
\leq \mathcal{E}(\rho)-\mathcal{E}(\rho_\infty).
\end{equation*}
\end{lemma}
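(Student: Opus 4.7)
The plan is to expand $\mathcal{E}(\rho)-\mathcal{E}(\rho_\infty)$ around the minimizer and use the Euler--Lagrange characterization of $\rho_\infty$ given by \eqref{min1}--\eqref{min2}. The key observation is that the nonlocal interaction part of $\mathcal{E}$ is \emph{quadratic} in $\rho$: by Plancherel one has
\begin{equation*}
\tfrac{1}{2}\int_{\R^d}\rho(-\Delta)^{-s}\rho\,dx=\tfrac{1}{2}\bigl\|(-\Delta)^{-s/2}\rho\bigr\|_{2}^{2},
\end{equation*}
and this bilinear form is positive semidefinite. The confinement part $\tfrac{\lambda}{2}\int |x|^2\rho$ is linear in $\rho$. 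Therefore the whole functional admits a clean ``Taylor expansion'' of second order with no remainder.

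Concretely, I would write $\rho=\rho_\infty+(\rho-\rho_\infty)$ and plug into $\mathcal{E}(\rho)$. Using symmetry of the kernel $W$, the cross term produces exactly $\int(\rho-\rho_\infty)(-\Delta)^{-s}\rho_\infty\,dx$, while the quadratic part of the perturbation is $\tfrac12\|(-\Delta)^{-s/2}(\rho-\rho_\infty)\|_2^2$. The confinement contributes $\lambda\int\tfrac{|x|^2}{2}(\rho-\rho_\infty)\,dx$. Regrouping gives the identity
\begin{equation*}
\mathcal{E}(\rho)-\mathcal{E}(\rho_\infty)
=\int_{\R^d}(\rho-\rho_\infty)\Bigl[(-\Delta)^{-s}\rho_\infty+\tfrac{\lambda}{2}|x|^2\Bigr]dx
+\tfrac{1}{2}\bigl\|(-\Delta)^{-s/2}(\rho-\rho_\infty)\bigr\|_{2}^{2}.
\end{equation*}
The conclusion will follow once I show that the first (linear) term on the right-hand side is nonnegative.

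To handle that linear term I will invoke the obstacle characterization of $\rho_\infty$. Denote $\Psi(x):=(-\Delta)^{-s}\rho_\infty(x)+\tfrac{\lambda}{2}|x|^2$. By \eqref{min1}--\eqref{min2}, $\Psi\equiv C_*$ on $\operatorname{supp}(\rho_\infty)$ and $\Psi\geqslant C_*$ a.e.~on $\R^d$. Splitting the integration into $\operatorname{supp}(\rho_\infty)$ and its complement and using $\rho_\infty=0$ outside its support, together with mass conservation $\int\rho=\int\rho_\infty$, I get
\begin{equation*}
\int_{\R^d}(\rho-\rho_\infty)\Psi\,dx
=\int_{\R^d\setminus\operatorname{supp}(\rho_\infty)}\rho(\Psi-C_*)\,dx
+ C_*\!\int_{\R^d}(\rho-\rho_\infty)\,dx
\;\geqslant\;0,
\end{equation*}
since the first integrand is nonnegative and the second integral vanishes.

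The main (mild) obstacle is checking that all the integrals in the expansion are well defined so the Plancherel identity and the splitting above are legitimate: if either side is $+\infty$ there is nothing to prove, so I may assume $\mathcal{E}(\rho)<\infty$, which forces $(-\Delta)^{-s/2}\rho\in L^2$ (via the Riesz energy being finite) and makes every manipulation above rigorous on the Fourier side. With that in place the two displayed facts combine directly into the claimed inequality.
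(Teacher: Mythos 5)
Your proposal is correct and follows essentially the same route as the paper: both expand the quadratic energy around $\rho_\infty$, use the Euler--Lagrange/obstacle characterization \eqref{min1}--\eqref{min2} together with mass conservation to show the linear term $\int(\rho-\rho_\infty)\big[(-\Delta)^{-s}\rho_\infty+\tfrac{\lambda}{2}|x|^2\big]dx$ is nonnegative, and identify the quadratic remainder as $\tfrac12\|(-\Delta)^{-s/2}(\rho-\rho_\infty)\|_2^2$. Your added remark on finiteness of $\mathcal{E}(\rho)$ is a harmless refinement of the same argument.
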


\begin{proof} The characterization~\eqref{min1} and~\eqref{min2} of the global minimizer $\rho_\infty$
and the non-negativity of $\rho-\rho_\infty$ outside of the support of $\rho_\infty$ imply that
\[
0 = C_*\int_{\mathbb{R}^d} (\rho - \rho_\infty)
\leq \int_{\mathbb{R}^d} \left((-\Delta)^{-s}\rho_\infty(x) + \lambda\frac{|x^2|}{2}\right)(\rho-\rho_\infty).
\]
Therefore, we deduce
\begin{align*}
\mathcal{E}(\rho)-\mathcal{E}(\rho_\infty)
&= \frac{1}{2}\int \rho (-\Delta)^{-s}\rho
-\frac{1}{2}\int \rho (-\Delta)^{-s}\rho_\infty
+\frac{\lambda}{2}\int |x|^2(\rho-\rho_\infty) \cr
&\geq \frac{1}{2}\int \rho (-\Delta)^{-s}\rho
-\frac{1}{2}\int \rho (-\Delta)^{-s}\rho_\infty
-\int (\rho-\rho_\infty) (-\Delta)^{-s}\rho_\infty \cr
&= \frac{1}{2}\int (\rho-\rho_\infty)(-\Delta)^{-s}(\rho-\rho_\infty)
=\frac{1}{2}\|(-\Delta)^{-\frac{s}{2}}(\rho-\rho_\infty)\|_2^2.
\end{align*}
\end{proof}

Since the norm $\|(-\Delta)^{-\frac{s}{2}}(\rho-\rho_\infty)\|_2$ is in general weak, it is unlikely to produce a bound on any stronger $L^p$ norm for the difference  $\rho-\rho_\infty$. One way to show the exponential
convergence of $\rho(t)$ to $\rho_\infty$ is by assuming that a higher
norm on $\rho-\rho_\infty$ is bounded. For example, if
$\|(-\Delta)^{\frac{s}{2}}(\rho-\rho_\infty)\|_2$ is uniformly bounded,
then we have (easy to establish in Fourier space)
\[
 \|\rho - \rho_\infty\|_2^2
 \leq \|(-\Delta)^{\frac{s}{2}}(\rho-\rho_\infty)\|_2
 \|(-\Delta)^{-\frac{s}{2}}(\rho-\rho_\infty)\|_2
\]
and $\|\rho - \rho_\infty\|_2$ converges to zero also
exponentially fast, but with a smaller rate.

Let us prove that in fact the exponential convergence also holds in $L^2$ without any additional hypothesis. For this we shall use the following interpolation inequality.

\begin{theorem}\label{Theorem2}
Let $0 < \alpha\meni 1$ and $0< s < d/2$ and $0<r<\alpha/2$. There exists a constant $C = C(d,s,\alpha)$ such that
\begin{equation}\label{interp}
\norm{u}_2 \meni C \|(-\Delta)^{-\frac{s}{2}}u\|_2^{\sigma_1} \;[u]_\alpha^{\sigma_2} \;\norm{u}_1^{\sigma_3}
\end{equation}
for all $u \in L^1(\R^d)\cap C^\alpha(\R^d)$ with
\[
\sigma_1 =\frac{r}{s+r} \,,\qquad \sigma_2 = \frac{s(d+2r)}{2(d+\alpha)(s+r)} \,,\qquad \sigma_3 = \frac{s(d+2\alpha-2r)}{2(d+\alpha)(s+r)}.
\]
\end{theorem}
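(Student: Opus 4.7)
The target inequality is a Gagliardo--Nirenberg type estimate with two ingredients on the right-hand side: one negative-order Sobolev norm and one H\"older seminorm. My strategy is to introduce an intermediate positive-order homogeneous Sobolev quantity $\|(-\Delta)^{r/2}u\|_2$ and split the proof in two. The first step is purely spectral: I write
\[
\int|\hat u(\xi)|^2\,d\xi = \int\bigl(|\xi|^{-2s}|\hat u|^2\bigr)^{a}\bigl(|\xi|^{2r}|\hat u|^2\bigr)^{b}\,d\xi,
\]
choose $a+b=1$ and $-2sa+2rb=0$ to kill the power of $|\xi|$, which forces $a=r/(s+r)$, $b=s/(s+r)$, and then apply H\"older's inequality to obtain
\[
\|u\|_2 \meni \|(-\Delta)^{-s/2}u\|_2^{r/(s+r)}\,\|(-\Delta)^{r/2}u\|_2^{s/(s+r)}.
\]
This already matches $\sigma_1$; matching $\sigma_2$ and $\sigma_3$ reduces to proving the intermediate estimate
\[
\|(-\Delta)^{r/2}u\|_2 \meni C\,[u]_\alpha^{(d+2r)/(2(d+\alpha))}\,\|u\|_1^{(d+2\alpha-2r)/(2(d+\alpha))}.
\]
A quick scaling check $u\mapsto u(\lambda\cdot)$ confirms that these are the unique admissible exponents, so homogeneity dictates what the answer must be and only the qualitative bound is at issue.

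\textbf{The intermediate bound.} Since $r<\alpha/2\meni 1/2<1$, I use the Gagliardo identity
\[
\|(-\Delta)^{r/2}u\|_2^2 = C_{d,r}\int_{\R^d}\int_{\R^d}\frac{|u(x+h)-u(x)|^2}{|h|^{d+2r}}\,dh\,dx
\]
and split the $h$-integral at a scale $\delta>0$ to be optimized. On $|h|<\delta$ I combine the H\"older estimate with the $L^1$ bound via
\[
\int|u(x+h)-u(x)|^2\,dx \meni [u]_\alpha|h|^\alpha\int|u(x+h)-u(x)|\,dx \meni 2[u]_\alpha|h|^\alpha\|u\|_1,
\]
and the remaining $\int_{|h|<\delta}|h|^{\alpha-d-2r}\,dh$ is integrable near the origin \emph{precisely because} $\alpha-2r>0$; this contributes $\meni C[u]_\alpha\|u\|_1\delta^{\alpha-2r}$. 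On $|h|\maig\delta$ I use the brutal estimate $\int|u(x+h)-u(x)|^2\,dx\meni 4\|u\|_\infty\|u\|_1$ together with the classical one-line interpolation $\|u\|_\infty\meni C\|u\|_1^{\alpha/(d+\alpha)}[u]_\alpha^{d/(d+\alpha)}$ (a ball of radius $\sim(\|u\|_\infty/[u]_\alpha)^{1/\alpha}$ around a near-maximizer of $|u|$ carries mass $\gtrsim\|u\|_\infty$); this contributes $\meni C\|u\|_1^{(d+2\alpha)/(d+\alpha)}[u]_\alpha^{d/(d+\alpha)}\delta^{-2r}$.

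\textbf{Optimization and combination.} Balancing the two contributions is the elementary one-variable problem $\min_\delta(A\delta^{\alpha-2r}+B\delta^{-2r})$, with minimum of order $A^{2r/\alpha}B^{1-2r/\alpha}$. Collecting the resulting powers of $[u]_\alpha$ and $\|u\|_1$ gives, after a routine simplification, exactly $(d+2r)/(d+\alpha)$ and $(d+2\alpha-2r)/(d+\alpha)$; taking a square root and inserting into the spectral interpolation from the first step finishes the proof.

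\textbf{Main obstacle.} The whole argument is essentially bookkeeping; the only substantive point is the need for $\alpha-2r>0$ so that the short-range $|h|$-integral converges at the origin, and this is exactly the hypothesis $r<\alpha/2$. The endpoint $r=\alpha/2$ is critical: in that limit the exponents degenerate to the sharp $\sigma_1=\alpha/(2s+\alpha)$, $\sigma_2=\sigma_3=s/(2s+\alpha)$, but the short-range integral diverges logarithmically and one would need a Littlewood--Paley or Besov refinement. Keeping $r$ strictly below $\alpha/2$ avoids this and gives a whole one-parameter family of inequalities, sufficient for our applications.
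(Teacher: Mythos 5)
Your proof is correct and follows essentially the same route as the paper: the identical Fourier/Plancherel interpolation between $\|(-\Delta)^{-s/2}u\|_2$ and $\|u\|_{\dot H^r}$ giving $\sigma_1=r/(s+r)$, then the Gagliardo representation of $\|u\|_{\dot H^r}^2$ split at an optimized scale, with the hypothesis $r<\alpha/2$ used exactly where the paper uses it (integrability of the short-range part), closed by an $L^1$--$C^\alpha$ interpolation. The only difference is cosmetic: the paper bounds the far-field piece by $\|u\|_2^2 R^{-2r}$ and then invokes Nirenberg's interpolation between $L^2$ and $C^\alpha$, whereas you bound it by $\|u\|_\infty\|u\|_1\delta^{-2r}$ and prove the equivalent sup-norm interpolation $\|u\|_\infty\leq C\|u\|_1^{\alpha/(d+\alpha)}[u]_\alpha^{d/(d+\alpha)}$ by an elementary ball argument, arriving at the same intermediate estimate and the same exponents.
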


\begin{proof}
We first use Fourier variables, Plancherel's formula, and the H\"older's inequality to interpolate between $\dot{H}^{r}(\R^d)$ and $(-\Delta)^{-\frac{s}{2}}u \in L^2(\R^d)$ obtaining
\begin{align}
\norm{u}_2^2 = \int_{\R^d} |\widehat u(\xi)|^2d\xi\le&\, \left(\int_{\R^d} |\widehat u(\xi)|^2|\xi|^{-2s}d\xi\right)^{\sigma_1}\left(\int_{\R^d} |\widehat u(\xi)|^2|\xi|^{2r}d\xi\right)^{1-\sigma_1} \nonumber \\
=&\, \|(-\Delta)^{-\frac{s}{2}}u\|_2^{2\sigma_1}\left(\int_{\R^d} |\widehat u(\xi)|^2|\xi|^{2r}d\xi\right)^{1-\sigma_1}\label{interfourier}
\end{align}
where $\sigma_1=r/(s+r)$, for all $0<s<1/2$ and $r>0$.

Our aim now is to bound $\dot{H}^{r}(\R^d)$ by $[u]_\alpha$ and $\|u\|_1$. We write the singular integral representation of this norm (Proposition 3.4 of~\cite{Hitchhiker}) and we split it as
\begin{align*}
\|u\|_{\dot{H}^r}^2=\int_{\R^d} |\widehat u(\xi)|^2|\xi|^{2r}d\xi= &\,C_{d,r}\int_{\R^d}\int_{\R^d} \frac{(u(x)-u(y))^2}{|x-y|^{d+2r}}\,dxdy\\
=&\,C_{d,r}\iint_{|x-y|\le R} \frac{(u(x)-u(y))^2}{|x-y|^{d+2r}}\,dx dy +
C_{d,r}\iint_{|x-y|> R} \frac{(u(x)-u(y))^2}{|x-y|^{d+2r}}\,dx dy\\
:=&\,I_1+I_2\,.
\end{align*}
To estimate $I_1$, we make use of $|u(x)-u(y)|\le [u]_\alpha \, |x-y|^{\alpha}$ to get, by the change of variables $(z,w)=(x-y,x+y)$, that
\begin{align*}
I_1&=C_{d,r}\iint_{|x-y|\le R} \frac{(u(x)-u(y))^2}{|x-y|^{d+2r}}\,dxdy\le
C_{d,r}[u]_\alpha \iint_{|x-y|\le R} \frac{|u(x)-u(y)|}{|x-y|^{d+2r-\alpha}}\,dxdy\\
&\le C [u]_\alpha \norm{u}_1\,  \int_{|z|\le R} |z|^{\alpha-2r-d}\,dz \leq C [u]_\alpha\|u\|_1 R^{\alpha-2r}\,,
\end{align*}
where the last step is allowed since $2r<\alpha$. On the other hand, we can similarly estimate the far field term as
$$
I_2=C_{d,r}\iint_{|x-y|\ge R} \frac{(u(x)-u(y))^2}{|x-y|^{d+2r}}\,dxdy\le 4C_{d,r}\int_{\R^d} |u(x)|^2dx\,\int_{|z|\ge R}\frac{dz}{|z|^{d+2r}} \le C \|u\|^2_2 R^{-2r}\,.
$$
Joining the two integrals and optimizing in $R$, we infer
\begin{equation}\label{eqq.3}
\|u\|_{\dot{H}^r}^2 \le C \|u\|_2^{2(\alpha-2r)/\alpha}\|u\|_1^{2r/\alpha}[u]_\alpha^{2r/\alpha}.
\end{equation}
We finally use the classical interpolation results between $L^p(\R^d)$ and $C^\alpha(\R^d)$ spaces due to L.~Nirenberg in \cite{Nirenberg}, see also \cite{BBDGV} for a full statement. This interpolation inequality ensures the existence of a constant depending on $\alpha$ and $d$ such that
$$
\norm{u}_2^2 \le C\norm{u}_1^{(d+2\alpha)/(\alpha+d)} [u]_\alpha^{d/(\alpha+d)}\,.
$$
Putting it together with \eqref{eqq.3}, it yields
$$
\|u\|_{\dot{H}^r}^2 \le C \|u\|_1^{(d+2\alpha-2r)/(d+\alpha)}[u]_\alpha^{(d+2r)/(d+\alpha)}\,.
$$
Finally, we plug this into \eqref{interfourier} to conclude \eqref{interp}.
\end{proof}

Therefore, from Theorem~\ref{Theorem} and Theorem~\ref{Theorem2}, we derive the following decay towards the stationary state under the $L^2$ norm.

\begin{corollary}\label{decayl2}
Assume that $\rho_0$ satisfies $0\meni \rho_0(x) \meni Ae^{-a|x|}$ for all
$x \in \R$ and some $a,A \maig 0$. Then, for each $0<s <1/2$, the solution of ~\eqref{eq:fracPMESM} with initial data $\rho_0$ satisfies
\[
\norm{\rho(t)-\rho_\infty}_2 \meni C \left(1 + [\rho_\infty]_\alpha \right)^{\sigma_2}\left(\E(\rho_0)-\E(\rho_\infty) \right)^{\frac{\sigma_1}{2}} e^{-\lambda\sigma_1 t} \,.
\]
\end{corollary}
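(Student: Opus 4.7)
The plan is to apply the interpolation inequality of Theorem~\ref{Theorem2} to $u=\rho(t)-\rho_\infty$ and then bound the three factors on the right-hand side separately: the weak negative Sobolev factor by the relative entropy via Lemma~\ref{lemmaE} and Theorem~\ref{Theorem}, the $L^1$ factor by mass conservation, and the H\"older factor by a uniform regularity estimate for the solution.

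First I would write, by Theorem~\ref{Theorem2} applied to $u=\rho(t)-\rho_\infty$ (which belongs to $L^1\cap C^\alpha$ for the range of $s$ and $\alpha$ under consideration),
\[
\|\rho(t)-\rho_\infty\|_2 \meni C\,\|(-\Delta)^{-s/2}(\rho(t)-\rho_\infty)\|_2^{\sigma_1}\,[\rho(t)-\rho_\infty]_\alpha^{\sigma_2}\,\|\rho(t)-\rho_\infty\|_1^{\sigma_3}.
\]
The first factor is controlled by Lemma~\ref{lemmaE}, which gives
\[
\|(-\Delta)^{-s/2}(\rho(t)-\rho_\infty)\|_2 \meni \sqrt{2\bigl(\E(\rho(t))-\E(\rho_\infty)\bigr)},
\]
and inserting the exponential decay from Theorem~\ref{Theorem} produces the key factor $\bigl(\E(\rho_0)-\E(\rho_\infty)\bigr)^{\sigma_1/2}e^{-\lambda\sigma_1 t}$.

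For the second and third factors I would argue as follows. Since both $\rho(t)$ and $\rho_\infty$ are probability densities, mass conservation along \eqref{eq:fracPMESM} yields $\|\rho(t)-\rho_\infty\|_1\meni 2$, so that the $\|\cdot\|_1^{\sigma_3}$ factor is absorbed into the constant $C$. For the H\"older seminorm, I would use the subadditivity
\[
[\rho(t)-\rho_\infty]_\alpha \meni [\rho(t)]_\alpha + [\rho_\infty]_\alpha,
\]
together with the uniform-in-time H\"older regularity of the solution provided by the theory of \cite{vazquez3}; the bound on $[\rho(t)]_\alpha$ (and a standard bound $[\rho_\infty]_\alpha$ is finite because $\rho_\infty=K_{1,s}(R^2-x^2)_+^{1-s}$ belongs to $C^{1-s}$) is absorbed into $C$, leaving the explicit factor $(1+[\rho_\infty]_\alpha)^{\sigma_2}$ displayed in the statement.

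The main obstacle is ensuring a uniform-in-time H\"older seminorm for $\rho(t)$: the interpolation estimate \eqref{interp} is completely useless unless $[\rho(t)-\rho_\infty]_\alpha$ is controlled independently of $t$. This is precisely the content of the regularity theory from \cite{vazquez3}, which I would invoke as a black box. Once this is granted, the rest is a routine assembly of the three bounds into the desired inequality, and the exponents $\sigma_1,\sigma_2,\sigma_3$ given in Theorem~\ref{Theorem2} produce exactly the rate $e^{-\lambda\sigma_1 t}$ claimed in the corollary.
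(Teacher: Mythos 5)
Your proposal is correct and follows essentially the same route as the paper: apply the interpolation inequality of Theorem~\ref{Theorem2} to $u=\rho(t)-\rho_\infty$, control the $\|(-\Delta)^{-s/2}u\|_2$ factor via Lemma~\ref{lemmaE} and the entropy decay of Theorem~\ref{Theorem}, bound $\|u\|_1$ by mass conservation, and use the uniform-in-time H\"older regularity from \cite{vazquez3} together with the $(1-s)$-H\"older continuity of $\rho_\infty$ for the seminorm factor. The paper's proof is just a condensed version of exactly this assembly.
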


\begin{proof}
Given $\rho_0$ under the conditions above, we know from Theorem 5.1 of~\cite{vazquez3} that there exists an $\alpha \in (0,1)$ such that the solution $\rho$ of~\eqref{eq:fracPMESM} satisfies $\rho(t) \in C^\alpha(\R)$ for all $t>0$ with a uniform bound in time. Since $\rho_\infty$ is $(1-s)$-H\"{o}lder continuous, we can use inequality \eqref{interp} for $u=\rho(t)-\rho_\infty$ and $0<r<2\min(\alpha,1-s)$ to conclude.
\end{proof}

Let us point out that the decay of the entropy in Theorem \ref{Theorem} implies a uniform in time control of the second moment of the solutions trivially at least for $0<s<1/2$. Otherwise, one has to work a bit due to the sign of the constant in the fractional operator. In any case, a uniform in time control of the second moments together with the $L^2$-decay rates implies $L^1$-decay rates of the form
\begin{align}\label{l1decay}
\norm{\rho(t)-\rho_\infty}_1 &\meni \int_{|x|<R} |\rho(t,x)-\rho_\infty(x)|dx +
\int_{|x|\geq R} |\rho(t,x)-\rho_\infty(x)|dx \cr
& \meni C\left( R^{d/2}\norm{\rho(t)-\rho_\infty}_2 + R^{-2}\int_{\mathbb{R}^d} |x|^2\big(\rho(t,x)+\rho_\infty(x)\big) dx \right)\cr
&\meni C\big({\cal E}(\rho_0)+{\cal E}(\rho_\infty)\big)^{d/(d+4)}\norm{\rho(t)-\rho_\infty}_2^{4/(d+4)},
\end{align}
by choosing $R\sim \big(({\cal E}(\rho_0)+{\cal E}(\rho_\infty))/\norm{\rho(t)-\rho_\infty}_2\big)^{2/(d+4)}$; see a similar calculation in \cite[Lemma 2.24]{MR2355628} for instance. In one dimension, using Corollary~\ref{decayl2},
we obtain the decay rate $e^{-4\lambda\sigma_1t/5}$ for $\norm{\rho(t)-\rho_\infty}_1$.

We finally remark that the decay in $L^p$-norms obtained via Corollary \ref{decayl2} and \eqref{l1decay} are translated through the change of variables \eqref{changevariables}-\eqref{changevariables2} into algebraic decay rates toward self-similar solutions of the original fractional porous medium equation \eqref{eq:fracPME}.

\appendix
\section{Distribution Derivatives of Riesz potential}
\label{sec:distdev}

In the discussion of the entropy dissipation methods in Section~\ref{BE},
the explicit expression of the Hessian matrix $D_{ij}(-\Delta)^{-s}\rho$ is needed to simplify the terms in $\mathcal{R}(\rho)$. Since the Riesz potential $(-\Delta)^{-s}$ is a singular integral, these second order derivatives can not be applied to the kernel~\eqref{eq:kernelW} directly, but can be derived
from several equivalent approaches. Below, we interpret $D_{ij}(-\Delta)^{-s}\rho$ as distributional derivatives, and obtain the expressions using the definition in a similar way as representing the velocity gradient using vorticity in fluid mechanics~\cite{MR1867882}.

For any test function $\phi \in C^\infty(\mathbb{R}^d)$, the distributional derivative $D_{ij}(-\Delta)^{-s}\rho$ is defined as
\[
    \big\langle D_{ij}(-\Delta)^{-s}\rho, \phi\big\rangle :=
    \big\langle (-\Delta)^{-s}\rho,D_{ij}\phi\big\rangle
    =c_{d,s}\int_{\mathbb{R}^d}\int_{\mathbb{R}^d}
    \frac{\rho(y)}{|x-y|^{d-2s}}\frac{\partial^2\phi(x)}{\partial x_i\partial
    x_j}dydx.
\]
Next, we use integration by parts to shift the derivatives from
the test function $\phi$ to the singular integral $(-\Delta)^{-s}\rho$,
by writing the above expression as a limit outside a ball. More precisely,
\begin{align*}
\big\langle (-\Delta)^{-s}\rho,D_{ij}\phi\big\rangle &=
\lim_{\epsilon \to 0^+} c_{d,s}
\int_{\mathbb{R}^d} \rho(y) \left[\int_{B(y,\epsilon)^c} \frac{1}{|x-y|^{d-2s}}
\frac{\partial^2\phi(x)}{\partial x_i\partial x_j} dx \right]dy \cr
&= \lim_{\epsilon \to 0^+} (d-2s)c_{d,s}
\int_{\mathbb{R}^d} \rho(y) \left[\int_{B(y,\epsilon)^c} \frac{x_i-y_i}{|x-y|^{d+2-2s}}
\frac{\partial\phi(x)}{\partial x_j} dx \right]dy,
\end{align*}
where $B(y,\epsilon)^c$ is the complement of the ball $B(y,\epsilon)
=\{x\in\mathbb{R}^d\mid |x-y|<\epsilon\}$ and
 the integration on the boundary $\partial B(y,\epsilon)$ vanishes in
the limit. Integrating by parts again, we obtain (the unit outer normal
at $x\in B(y,\epsilon)^c$ is $-(x-y)/|x-y|$)
\begin{multline}\label{eq:app1st}
\lim_{\epsilon\to 0^+} c_{d,s}^+
\int_{\mathbb{R}^d} \rho(y) \left[\int_{B(y,\epsilon)^c}
K_{ij}(x-y)\phi(x)dx \right.
\left.-\int_{\partial B(y,\epsilon)}
\frac{(x_i-y_i)(x_j-y_j)}{|x-y|^{d+3-2s}}\phi(x)dS_x
\right] dy,
\end{multline}
where $c_{d,s}^+=(d-2s)c_{d,s}$ and
\[
    K_{ij}(x) = \frac{1}{d-2s}\frac{\partial^2}{\partial x_i\partial x_j} |x|^{2s-d}=\frac{1}{|x-y|^{d+2-2s}}\left(
        (d+2-2s)\frac{x_ix_j}{|x|^{2}}-\delta_{ij}\right).
\]

Since for any $x\in \partial B(y,\epsilon)$, $\phi(x)
=\phi(y) + (x-y)\cdot\nabla \phi(y) + O(|x-y|^2)$, we can
replace $\phi(x)$ by $\phi(y)$ in the boundary integral in~\eqref{eq:app1st}, i.e.,
\[
    \lim_{\epsilon\to 0^+} \int_{\partial B(y,\epsilon)}
    \frac{(x_i-y_i)(x_j-y_j)}{|x-y|^{d+3-2s}}\;\phi(x)dS_x
= \phi(y)  \lim_{\epsilon\to 0^+} \int_{\partial B(y,\epsilon)}
    \frac{(x_i-y_i)(x_j-y_j)}{|x-y|^{d+3-2s}}\;dS_x.
\]
It is easy to see that for $j\neq i$,
\[
\int_{\partial B(y,\epsilon)} \frac{(x_i-y_i)(x_j-y_j)}{|x-y|^{d+3-2s}}\;dS_x = \int_{B(y,\epsilon)^c} K_{ij}(x-y) dx
 =0,
\]
and for $j=i$,
\[
 \int_{\partial B(y,\epsilon)} \frac{(x_i-y_i)(x_i-y_i)}{|x-y|^{d+3-2s}}\;dS_x = \int_{B(y,\epsilon)^c} K_{ii}(x-y) dx
=\frac{|\mathbb{S}^{d-1}|}{d}\epsilon^{2s-2},
\]
where $|\mathbb{S}^{d-1}|$ is the area of the unit sphere $\mathbb{S}^{d-1}=\{x\in\mathbb{R}^d\mid |x|=1\}$.

Therefore, the distributional derivative $\big\langle D_{ij}(-\Delta)^{-s}\rho, \phi\big\rangle$
written as the limit~\eqref{eq:app1st} can be simplified as
\begin{align*}
\big\langle D_{ij}(-\Delta)^{-s}\rho, \phi\big\rangle& = \lim_{\epsilon \to 0^+} c_{d,s}^+
\int_{\mathbb{R}^d} \rho(y) \left[\int_{B(y,\epsilon)^c} K_{ij}(x-y)
\phi(x)dy - \phi(y)\int_{B(y,\epsilon)^c} K_{ij}(x-y)dy\right]dy\cr
&=\lim_{\epsilon \to 0^+} c_{d,s}^+\iint_{|x-y|>\epsilon}
K_{ij}(x-y)\big( \rho(y)\phi(x)-\rho(y)\phi(y)\big) dydx\cr
&= -\lim_{\epsilon \to 0^+} c_{d,s}^+\int_{\mathbb{R}^d}
\phi(x)\left[\int_{B(x,\epsilon)}K_{ij}(x-y)\big(\phi(x)-\phi(y))dy\right]dx.
\end{align*}
This implies the following singular integral represent of the Hessian
matrix of $(-\Delta)^{-s}\rho$:
\[
 D_{ij}(-\Delta)^{-s}\rho(x) = -
c_{d,s}^+\int_{\mathbb{R}^d}K_{ij}(x-y)\big(\rho(x)-\rho(y)\big)dy.
\]

In particular, we can write the fractional Laplacian $(-\Delta)^{1-s}
\rho$ as
\begin{align*}
 (-\Delta)^{1-s}\rho(x) &= -
\sum_{i=1}^d D_{ii}(-\Delta)^{-s}\rho(x)
=c_{d,s}^+\int_{\mathbb{R}^d}K_{ij}(x-y)\big(\rho(x)-\rho(y)\big)dy \\
&=c_{d,s}^+\int_{\mathbb{R}^d} \frac{\rho(x)-\rho(y)}{|x-y|^{d+2-2s}}\; dy,
\end{align*}
recovering its standard singular integral representation~\cite{MR0350027,MR0290095}.

\section*{Acknowledgement}
This work was partially done while visiting the Isaac Newton Institute, Cambridge, UK, and the authors are grateful for its hospitality. JAC acknowledges support from projects MTM2011-27739-C04-02, 2009-SGR-345 from Ag\`encia de Gesti\'o d'Ajuts Universitaris i de Recerca-Generalitat  de Catalunya, and the Royal Society through a Wolfson Research Merit Award. JAC and YH acknowledge support from the Engineering and Physical Sciences Research Council (UK) grant number EP/K008404/1. MCS acknowledges support from Coordenação de Aperfeiçoamento de Pessoal de Nível Superior - CAPES (BR) project number BEX 0024/13-9. JLV is partially supported by Spanish Project MTM2011-24696.



\end{document}